\def\ps@pprintTitle{%
 \let\@oddhead\@empty
 \let\@evenhead\@empty
 \def\@oddfoot{}%
 \let\@evenfoot\@oddfoot}
\newtheorem{theoreme}{Th\'eor\`eme}[section]
\newtheorem{e-proposition}[theoreme]{Proposition}
\newtheorem{e-definition}[theoreme]{Definition\rm}
\newtheorem{lemme}[theoreme]{Lemme}
\newtheorem{proposition}[theoreme]{Proposition}
\newtheorem{corollaire}[theoreme]{Corollaire}
 \newtheorem{Fait}{\textbf{Fait}}[section]
\newtheorem*{property}{\textbf{Propriété}}
\begin{document}
\nocite{*}
\sloppy

\begin{frontmatter}

\title{Sommets fortement critiques d'un tournoi indécomposable}
\author{Rachid Sahbani\fnref{}}
\address{Sfax University, Faculty of Sciences of Sfax, Sfax, Tunisia}
\ead{rachidsahbani5@gmail.com}

\begin{abstract}
Let $T=(V,A)$ be a tournament. For $X\subseteq V$, the subtournament of $T$ induced by $X$ is denoted by $T[X]$. A subset $I$ of $V$ is an interval of $T$ provided that for every $a,b\in I$ and $x\in V\setminus I$, $(a,x)\in A$ if and only if $(b,x)\in A$. For example, $\varnothing $, ${x}$ ($x \in V$) and $V$ are intervals of $T$, called trivial intervals. The tournament $T$ is indecomposable if all its intervals are trivial, otherwise, it is decomposable. 
A critical tournament is an indecomposable tournament $T$ of cardinality $\geqslant 5$ such that every vertex $x$ of $T$ is critical, i.e., the subtournament $T[V(T)\setminus\{x\}]$ is decomposable. Given an  indecomposable tournament $T$, a vertex $x$ of $T$ is  strongly critical, if for every $X\subseteq V(T)$ such that $x\in X$, $\vert X\vert \geqslant 5$ and $T[X]$ is indecomposable, $x$ is a critical vertex of $T[X]$. Let $T$ be an indecomposable tournament and let  $\mathscr{C}(T)$ be the set of the strongly critical vertices of $T$. We prove that, if $T$ is non-critical, then $f(T):=\vert  \mathscr{C}(T)\vert \leqslant 4$, and that the  correspondence $f(T)$ is decreasing from the class of indecomposable and non-critical  tournaments (defined by means of embedding) to $\{0,1,2,3,4\}$. By giving examples, we also verify that the bounds 0 and 4 are optimal. This article is an extract from my master's thesis \cite{mon mastère}.
 
\end{abstract}
\begin{keyword}
 Tournament \sep Interval \sep Indecomposable \sep Critical  
\MSC[2010] 05C20 \sep  05C75 
\end{keyword}
\end{frontmatter}
\section*{Résumé} 
 Soit $T=(V,A)$ un tournoi. Pour $X\subseteq V$, le sous-tournoi de $T$ induit par $X$ est noté $T[X]$. Une partie $I$ de $V$ est un intervalle de $T$ lorsque pour tous $a,b\in I$ et $x\in V\setminus I$, $(a,x)\in A$ si et seulement si $(b,x)\in A$. Par exemple, $\varnothing$, $V$ et $\{x\}$ ($x\in V$) sont des intervalles de $T$, appelés intervalles triviaux.  Un tournoi est indécomposable lorsque tous ses intervalles sont triviaux, sinon il est décomposable. Un tournoi critique $T$ est un tournoi indécomposable d'ordre $\geqslant 5$ tel que tout sommet $x$ de $T$ est critique, i.e., le sous-tournoi $T[V(T)\setminus\{x\}]$ est décomposable. \'Etant donné un tournoi indécomposable $T$, un sommet $x$ de $T$ est fortement critique lorsque  pour toute partie $X$ de $V(T)$ telle que $x\in X$, $\vert X\vert \geqslant 5$ et $T[X]$ est indécomposable, $x$ est un sommet critique de  $T[X]$. Soit $T$ un tournoi indécomposable et soit $\mathscr{C}(T)$ l'ensemble des sommets fortement critiques de $T$. Nous démontrons que si $T$ est non critique, alors $f(T):=\vert  \mathscr{C}(T)\vert \leqslant 4$, et que la correspondance $f(T)$ est décroissante de la classe des tournois indécomposables et non critiques (munie du préordre de l'abritement) sur $\{0,1,2,3,4\}$. En construisant des exemples de tailles arbitraires, nous vérifions aussi que les bornes 0 et 4 sont optimales. Cet article est un extrait de mon mémoire de mastère \cite{mon mastère}.

 \section{Introduction}
Un \textit{tournoi} $T$  est un couple $(V,A)$, où $V$ est un ensemble fini, appelé ensemble des \textit{sommets} de $T$, et $A$ est un ensemble de couples de sommets distincts de $T$, appelé ensemble des \textit{arcs } de $T$, vérifiant : pour tous $x\neq y \in V$, $(x, y)\in A$ si et seulement si $(y, x)\notin A$. À chaque partie $X$ de $V$ est associé le sous-tournoi 
$T[X] =(X, A \cap (X \times X))$ \textit{induit } par $X$. Le sous-tournoi induit $T[V\setminus X]$ est aussi noté $T-X$, et est noté $T-x$ lorsque $X=\{x\}$. L'ordre d'un tournoi $T$ est le cardinal de son ensemble de sommets $V(T)$. Soit $T=(V,A)$ un tournoi. Nous introduisons les notations et les notions  suivantes. Pour tous $x\neq y\in V$, $x\longrightarrow y$ signifie que $(x,y)\in A$. Pour  $Y\subset V$ et $x\in V\setminus Y$, $x\longrightarrow Y$ (resp. $Y\longrightarrow x$) signifie que pour tout $y\in Y$, $x\longrightarrow y$ (resp. $y\longrightarrow x$). Pour tout $x\in V$, on note $N^+_T(x)=\{ y\in V: (x,y)\in A\}$ et $N^-_T(x)=\{ y\in V: (y,x)\in A\}$. Un \textit{tournoi transitif} ou un \textit{ordre total} est un tournoi $T$ tel que pour tous $x,y,z\in V(T)$, si $(x,y)\in A(T)$ et $(y,z)\in A(T)$, alors $(x,z)\in A(T)$. Pour tout entier $n\geqslant 2$, l'ordre total usuel $O_n$ est le tournoi défini sur $\{0,\ldots, n-1\}$ par $A(O_n)=\{(i,j): 0\leqslant i< j\leqslant n-1\}$.

 Deux tournois $T=(V,A)$ et $T'=(V',A')$ sont  \textit{isomorphes}, et on écrit $T\simeq T'$, lorsqu'il existe un isomorphisme de $T$ su $T'$, c'est à dire une bijection de $f$ de $V$ su $V'$ telle que pour tous $x,y\in V$, $(x,y)\in A$ si et seulement si $(f(x),f(y))\in A'$. Un tournoi $T$ \textit{abrite} un tournoi $T'$ si $T'$ est isomorphe à un sous tournoi de $T$, sinon $T$ \textit{omet} $T'$. \`A tout tournoi $T=(V, A)$ est associé son tournoi \textit{dual} $T^{\star}=(V, A^{\star})$, où $A^{\star}=\{(y,x): (x,y)\in A\}$. Un tournoi est \textit{autodual} s'il est isomorphe à son dual. 

\'Etant donné un tournoi $T=(V,A)$, une partie $I$ de $V$ est un \textit{intervalle} \cite{F1, F2, I, S.T} (ou {\it clan} 
\cite{er} ou ensemble {\it homogène} \cite{strong intervals,strong interval}  ou {\it module}  \cite{Muller}) de $T$ lorsque pour tout $x\in V\setminus I$,  $x\longrightarrow I$ ou bien $I\longrightarrow x$. Par exemple $\varnothing$, $\{x\}$ où $x\in V$, et $V$ sont des intervalles de $T$, appelés les intervalles triviaux de $T$. Un tournoi, à au moins trois sommets, est {\it indécomposable} \cite{ S.T,I} \index{indécomposable} (ou {\it premier} \cite{CH} ou {\it primitif} \cite{EHR, er}) lorsque tous ses intervalles sont triviaux, sinon il est {\it décomposable}. Par exemple, le 3-cycle $C_3=(\{0,1,2\},\{(0,1),(1,2),(2,1)\})$ est indécomposable alors qu'un ordre total est décomposable.
\'Etant donnés deux tournois isomorphes $T$ et $T'$, si $f$ est un isomorphisme de $T$ sur $T'$, alors une partie $I$ de $V(T)$ est un intervalle de $T$ si et seulement si $f(I)$ est un intervalle de de $T'$. En particulier $T$ est indécomposable si et seulement si $T'$ est indécomposable. Notons aussi qu'un tournoi $T$ et son dual $T^{\star}$ ont les même intervalles, et donc le tournoi $T$ est indécomposable si et seulement si $T^{\star}$ est indécomposable.

Rappelons que, à isomorphisme près, les quatre tournois à 4 sommets sont les tournois $O_4$, $T_4$ et les \textit{diamants} $D_4$ et son dual $D^{\star}_4$,  définis sur $\{0,1,2,3\}$ par:

 $A(O_4)=\{(i,j): 0\leqslant i<j\leqslant3\}$,
 
  $A(T_4)=\{(0,1), (0,2), (1,2), (2,3), (3,0), (3,1) \}$, 
  
   $A(D_4)=\{ (0,1),(1,2), (2,0),(3,0),(3,1),(3,2)\} $.  
   
Clairement, tous les  tournois à 4 sommets sont tous décomposables et seul $C_3$, à isomorphisme près, le tournoi indécomposable à trois sommets.

Maintenant, considérons un tournoi $T=(V,A)$,  à au moins $3$ sommets, avec une partie  $X$ de $V$ telle que $ \vert X\vert\geqslant 3 $ et $T[X]$ est indécomposable. On introduit les parties suivantes de $ V\setminus X $.
\begin{itemize}
\item[$\bullet$] $ Ext(X)$ est l'ensemble des $ x\in V\setminus X$ tels que $T[ X\cup\lbrace x\rbrace ]$ est indécomposable.
\item[$\bullet$] $ \langle X \rangle $ est l'ensemble des $ x\in V \setminus X $ tels que $X$ est un intervalle de $T[X\cup \lbrace x\rbrace ]$.
\item[$\bullet$] Pour tout $ u\in X $, $ X(u)$ est l'ensemble des $ x\in V\setminus X $ tels que   $ \lbrace u,x\rbrace $ est un intervalle de $T[ X\cup \lbrace x\rbrace ]$.
 \end{itemize}
 La famille  $\lbrace Ext(X),\langle X\rangle \rbrace \cup \lbrace X(u),u\in X \rbrace $ est notée  $p_{X}$ et est appelée {\it partition extérieure}  de $T$ induite par $X$.  
  \begin{lemme}\cite{er} \label{ER} Soit $ T=(V,A)$ un tournoi et soit $X$ une partie de $V$ telle que $ \vert X \vert\geqslant 3  $ et $ T[X]$ est  indécomposable. Les éléments non vides de la famille $p_{X}$ forment une partition de $ V\setminus X $. 
 \end{lemme}

Le théorème suivant est un résultat important sur  l'aspect héréditaire ascendant de l'indécomposabilité. 
\begin{theoreme} \cite{S.T} \label{ER +2, pour tournoi}
\'Etant donné un tournoi indécomposable $T$, pour toute  partie $X$ de $V(T)$ telle que $3 \leqslant |X| \leqslant |T| -2$ et $T[X]$ est indécomposable, il existe $x \neq y \in V(T) \setminus X$ tels que $T[X \cup \{x,y\}]$ est indécomposable.
\end{theoreme}

Comme tout tournoi indécomposable, à au moins trois sommets, abrite un tournoi indécomposable à trois  sommets \cite{er}, 
nous obtenons, En appliquant plusieurs fois le théorème~\ref{ER +2, pour tournoi}, ce que suit.  
\begin{corollaire} \label{-1-2, pour tournoi}
Tout tournoi indécomposable d'ordre $n \geqslant 5$ abrite un tournoi indécomposable d'ordre $n-1$ ou $n-2$.
\end{corollaire}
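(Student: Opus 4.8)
The plan is to construct inside $T$ an increasing chain of induced subtournaments, each one indecomposable, whose orders grow by exactly $2$ at each step, and then to read off the conclusion from the parity of $n$. Since Theorem~\ref{ER +2, pour tournoi} lets us enlarge an indecomposable subtournament by two vertices whenever its order lies between $3$ and $n-2$, the chain will sweep through the odd orders $3,5,7,\dots$ up to the largest one not exceeding $n-2$; a single final extension step will repair the remaining (even) case.

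First I would use the fact that every indecomposable tournament on at least three vertices embeds an indecomposable subtournament on three vertices, namely a copy of $C_3$ \cite{er}. As $n\geqslant 5\geqslant 3$, I fix $X_0\subseteq V(T)$ with $\vert X_0\vert =3$ and $T[X_0]$ indecomposable. Then I apply Theorem~\ref{ER +2, pour tournoi} repeatedly: as long as the current set $X_i$ satisfies $3\leqslant \vert X_i\vert \leqslant n-2$ with $T[X_i]$ indecomposable, the theorem yields $x\neq y\in V(T)\setminus X_i$ such that $T[X_i\cup\{x,y\}]$ is indecomposable, and I set $X_{i+1}=X_i\cup\{x,y\}$, so $\vert X_{i+1}\vert =\vert X_i\vert +2$. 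This produces, for every odd integer $m$ with $3\leqslant m\leqslant n-2$, an induced indecomposable subtournament of order $m$.

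Let $m$ be the largest such order reached, i.e. the largest odd integer with $3\leqslant m\leqslant n-2$. If $n$ is odd, then $n-2$ is odd, so $m=n-2$ and the corresponding subtournament of order $n-2$ is what we want. If $n$ is even, then $n-2$ is even, so $m=n-3$; since $n-3\leqslant n-2$, the set of order $n-3$ still satisfies the hypotheses of Theorem~\ref{ER +2, pour tournoi}, and one further application gives an indecomposable subtournament of order $(n-3)+2=n-1$. In either case $T$ embeds an indecomposable tournament of order $n-1$ or $n-2$, as claimed. The only genuinely delicate point is this parity bookkeeping: the $+2$ increment started from $3$ can never land on an even target, so one must explicitly argue that the even case is rescued by exactly one extra extension — there is no deeper obstacle, since Theorem~\ref{ER +2, pour tournoi} carries all of the structural content.
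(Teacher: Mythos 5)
Votre preuve est correcte et suit essentiellement la même démarche que l'article : partir d'un sous-tournoi indécomposable à $3$ sommets (un $C_3$) puis itérer le théorème~\ref{ER +2, pour tournoi}, la parité de $n$ déterminant si la chaîne des ordres impairs atteint $n-2$ ou s'arrête à $n-3$ avant une dernière extension vers $n-1$. L'article se contente d'invoquer cette itération sans détailler le décompte de parité ; vous l'explicitez correctement, sans rien changer au fond de l'argument.
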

 
 Le corollaire~\ref{-1-2, pour tournoi}, amène alors à introduire la notion de criticité. Un sommet \textit{critique} d'un tournoi indécomposable $T$ est un sommet  de $T$ tel que le tournoi $T-x$ est décomposable. Un tournoi indécomposable, à au moins cinq sommets, est  \textit{critique} \cite{Boniz,S.T} lorsque  tous ses sommets sont critiques.  De la caractérisation des tournois critiques, les auteurs de \cite{S.T} ont améliorés le corollaire~\ref{-1-2, pour tournoi} comme suit.
 \begin{theoreme} \cite{S.T} \label{ER -2, pour tournoi}
 Étant donné un tournoi indécomposable $T$ à au moins 7 sommets, il
existe deux sommets distincts $x$ et $y$ de $T$ tels que $T-\{x, y\}$ est indécomposable.
 \end{theoreme}
 Le théorème~\ref{ER -2, pour tournoi} est l'un des premiers résultats important sur les propriétés héréditaires descendantes de l'indécomposabilité. On en déduit que tout tournoi indécomposable et non critique d'ordre $n$, où $n \geqslant 5$, abrite un tournoi indécomposable de tout ordre compris entre 5 et $n$. 
 
 Dans \cite{BI}, les auteurs  ont donné un autre type d'hérédité qui s'énonce comme suit dans le cas des tournois.
\begin{corollaire} \cite{BI}\label{cor-1,2sommets}   \'Etant donné un tournoi indécomposable et 
non critique $T$, à au moins 7 sommets, il existe $x\in V(T)$ tel que le tournoi $T-x$ est aussi indécomposable et non critique.
\end{corollaire}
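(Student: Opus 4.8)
\noindent\textit{Esquisse de démonstration (plan).} Le plan est de ramener l'énoncé à la recherche d'une paire $\{b,c\}\subseteq V(T)$ telle que $T-\{b,c\}$ et $T-b$ soient tous deux indécomposables. Une telle paire suffit : $(T-b)-c=T-\{b,c\}$ étant indécomposable, $c$ est un sommet non critique de $T-b$, de sorte que $T-b$ est indécomposable et non critique, et $b$ convient. Pour produire une telle paire on utilise la partition extérieure du lemme~\ref{ER} : si $W\subseteq V(T)$ vérifie $T[W]$ indécomposable et $V(T)\setminus W=\{b,c\}$, alors $T-b=T[W\cup\{c\}]$ est indécomposable dès que $c\in Ext(W)$. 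Il suffit donc d'exhiber un sous-tournoi indécomposable $T[W]$ d'ordre $n-2$ (où $n:=\vert V(T)\vert$) dont l'un des deux sommets extérieurs appartienne à $Ext(W)$.

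\noindent Le cas où $n$ est impair est immédiat, en utilisant que tout tournoi critique est d'ordre impair \cite{S.T}. En effet, $T$ étant non critique, il existe $b$ avec $T-b$ indécomposable ; comme $T-b$ est alors d'ordre pair $n-1\geqslant 6$, il est non critique, et $b$ convient.

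\noindent Supposons donc $n$ pair, $n\geqslant 8$, et raisonnons par l'absurde : aucun sommet ne convient, c'est-à-dire que dès que $T-b$ est indécomposable, $T-b$ est critique ; de façon équivalente, toute paire $\{b,c\}$ telle que $T-\{b,c\}$ soit indécomposable vérifie que $T-b$ et $T-c$ sont décomposables. Comme $T$ est indécomposable et non critique d'ordre $n\geqslant 5$, il abrite un sous-tournoi indécomposable $T[W]$ d'ordre $n-2$ (remarque suivant le théorème~\ref{ER -2, pour tournoi}) ; notons $V(T)\setminus W=\{u,w\}$. Puisque $T[W]$ est d'ordre pair $n-2\geqslant 6$, il est non critique. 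La paire $\{u,w\}$ étant telle que $T-\{u,w\}=T[W]$ est indécomposable, l'hypothèse d'absurdité impose $u,w\notin Ext(W)$. Par le lemme~\ref{ER}, chacun de $u$ et $w$ est alors dans $\langle W\rangle$ ou dans un $W(t)$, et ils ne sauraient être tous deux dans $\langle W\rangle$, faute de quoi $W$ serait un intervalle non trivial de $T[W\cup\{u,w\}]=T$. On peut donc supposer $u\in W(s)$ pour un $s\in W$.

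\noindent Le point central, et principal obstacle, est de conclure à partir de cette configuration. L'ingrédient est l'échange de jumeaux : comme $\{s,u\}$ est un intervalle de $T[W\cup\{u\}]$, le sommet $u$ se comporte relativement à $W\setminus\{s\}$ exactement comme $s$, donc $T[(W\setminus\{s\})\cup\{u\}]=T-\{s,w\}$ est isomorphe à $T[W]$, donc indécomposable non critique d'ordre $n-2$. On dispose ainsi d'une nouvelle paire indécomposable $\{s,w\}$, et plus généralement on peut déplacer $u$ et $w$ le long de leurs classes de jumeaux et retirer des sommets non critiques du tournoi non critique $T[W]$. Le but est de montrer qu'au terme de ces manipulations l'un des sommets $u,w$ finit par appartenir à un $Ext$, fournissant la paire cherchée. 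Pour garantir que ce processus aboutit, on décrit comment un sommet peut s'ajouter à un tournoi critique en préservant l'indécomposabilité, et c'est ici qu'intervient la classification des tournois critiques \cite{S.T} (trois familles, toutes d'ordre impair) : une inspection cas par cas de ces familles et de la position du sommet ajouté — selon les deux configurations $u\in W(s),\, w\in W(t)$ et $u\in\langle W\rangle,\, w\in W(t)$ — montre qu'un tel $T$ indécomposable et non critique possède toujours un sommet $b$ avec $T-b$ indécomposable et non critique, contredisant l'hypothèse et achevant la preuve.
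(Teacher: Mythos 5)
Le papier ne démontre pas ce corollaire~: il est importé tel quel de \cite{BI} (où il est obtenu via le graphe d'indécomposabilité et le lemme~\ref{pi}), de sorte qu'il n'y a pas de preuve interne à laquelle comparer votre texte. Votre réduction préliminaire est correcte et bien menée~: l'observation qu'une paire $\{b,c\}$ avec $T-\{b,c\}$ et $T-b$ indécomposables suffit, le traitement complet du cas $n$ impair par parité (les tournois critiques étant d'ordre impair), la mise en place du cas pair via le théorème~\ref{ER -2, pour tournoi} et le lemme~\ref{ER}, et l'échange de jumeaux montrant que $T-\{s,w\}\simeq T[W]$ lorsque $u\in W(s)$ sont tous exacts.

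Il y a cependant un trou véritable, que vous signalez vous-même~: tout le poids de l'énoncé repose sur l'étape finale, qui n'est pas effectuée. D'une part, rien ne garantit que le «~déplacement de $u$ et $w$ le long de leurs classes de jumeaux~» aboutisse~: chaque échange produit une nouvelle paire $\{s,w\}$ soumise aux mêmes contraintes ($s,w\notin Ext(W')$), et sans argument de progression ou de terminaison ce processus peut boucler indéfiniment sans jamais faire tomber $u$ ou $w$ dans un $Ext$. D'autre part, l'«~inspection cas par cas~» des extensions d'un tournoi critique $T_{2m+1}$, $U_{2m+1}$ ou $W_{2m+1}$ par un sommet est précisément le c\oe{}ur combinatoire du résultat (c'est là qu'apparaissent, par exemple, les extensions du type $W_{2n+2}$ de la section~\ref{chapitre sommets fortement critiques}, qui sont indécomposables, non critiques, et pour lesquelles il faut exhiber explicitement le bon sommet $b$)~; l'affirmer sans la mener ne constitue pas une démonstration. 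Notez aussi une incohérence de cadrage~: dans votre cas pair, $T[W]$ est d'ordre pair donc non critique, et le tournoi critique auquel la classification devrait s'appliquer est plutôt un $T-b$ indécomposable (qui existe car $T$ est non critique, et qui est critique sous votre hypothèse d'absurdité)~; ce lien n'est pas établi dans votre texte. En l'état, la preuve est un plan plausible mais incomplet.
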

 Le théorème suivant peut être considéré comme un renforcement du théorème~\ref{ER +2, pour tournoi}, en ce qu'il permet d'agrandir la suite des sous-tournois indécomposables d'un tournoi indécomposable et non critique, d'un seul sommet au lieu de deux.
 \begin{theoreme} \cite{Gakuliu} \label{Gaku +1}
 Soit $T$ un tournoi indécomposable et non critique à au moins $6$ sommets et soit $H$ un sous-tournoi indécomposable de $T$ tel que $5\leqslant \vert H\vert <\vert T \vert$. Alors, $H$ s'abrite dans un sous-tournoi indécomposable à $(\vert H\vert +1)$ sommets de $T$.  
 \end{theoreme}
  Les résultats principaux de ce papier s'articulent autour du  théorème~\ref{Gaku +1}, il est organisé comme suit.  Dans la section~\ref{chapitre nouvelle preuve}, nous donnons une nouvelle preuve, plus courte et esthétique, du théorème~\ref{Gaku +1}. Dans la troisième section, nous proposons quelques applications  du théorème~\ref{Gaku +1}, en donnant des nouvelles preuves simples de Théorème~\ref{Houma} \cite{houma} et de Théorème~\ref{GL et D4} \cite{Gakuliu}.  Dans la section~\ref{chapitre sommets fortement critiques}, nous introduisons la notion de sommets fortement critiques dont on montre que leurs nombre ne dépasse pas 4 pour un tournoi indécomposable et non critique et que la borne 4 est optimale.
 
 \section{Nouvelle preuve du théorème~\ref{Gaku +1}}\label{chapitre nouvelle preuve}
 Dans cette section, nous proposons une nouvelle preuve  du théorème \ref{Gaku +1}, basée sur la notion de graphe d'indécomposabilité et du corollaire~\ref{cor-1,2sommets}. 
 
 Nous avons besoin d'introduire les notions suivantes. Un \textit{graphe} $G$ est la donnée d'un ensemble fini $V = V(G)$
de sommets avec un ensemble $ E = E(G)$ d'arêtes, où une arête est  une paire de sommets distincts. On note
$G =(V,E)$  et on dit que $G$ est défini sur $V$. Par exemple, pour tout entier $n \geqslant 3$, le chemin $P_n$ de longueur  
$n-1$ et le cycle $C_n$ de longueur $n$ sont les graphes définis sur $\{0,\ldots ,n-1\}$ de la façons suivante.
 Pour tous $i,j\in \{0,\ldots ,n-1\}$, $\{i,j\}$ est une arête de $P_n$ si $\vert i-j\vert =1$. Le cycle $C_n$ est
 alors obtenu à partir de $P_n$ en ajoutant l'arête $\{0, n-1\}$. Tout graphe isomorphe à $P_n$ (resp. $C_n$) est appelé \textit{chemin} (resp. \textit{cycle}).
 
  Soit $G=(V,E)$ un graphe. Un \textit{parcours} (de longueur $k-1$) de $G$ est une suite finie $(x_{1},\ldots,x_{k})$, où $k\geqslant 1$, de sommets de $G$ tels  que pour tout $i\in\lbrace 1,\ldots, k-1\rbrace$, on a $\lbrace x_{i},x_{i+1}\rbrace \in E$. On dit que ce parcours relie $x_{1} $ à $x_{k}$. Un \textit{parcours élémentaire} de $G$   est un parcours $(x_{1},\ldots,x_{k})$ de $G$ dont les sommets sont deux à deux distincts. Le graphe $G$ est \textit{connexe} lorsque pour tous  $x\neq y\in V$, il existe un parcours élémentaire de $G$ reliant $x$ à $y$. On définit sur $V$ la relation d'équivalence $R$ comme suit: pour tous $x,y\in V$, $x R y$ si et seulement s'il existe un parcours élémentaire de $G$ reliant $x$ à $y$. Les \textit{composantes connexes } du graphe $G$ sont les classes d'équivalence de $R$. Ainsi, une partie $C$ de $V$ est une composante connexe de $G$ si  $C$ est une partie  maximale (pour l'inclusion) de $V$, telle que $G[C]$ est connexe. Le graphe $G$ est donc connexe si et seulement s'il admet une seule composante connexe, et dans ce cas celle-ci est égale à $V$. Pour $x\in V$, l'ensemble des \textit{voisins}  de $x$ est $N_G(x)=\{y\in V: \{x,y\}\in E\}$. Le  sommet $x$ est  \textit{isolé} si $N_G(x)=\varnothing$.
 
 La notion de \textit{graphe d'indécomposablité} à été introduite par P. Ille \cite{I,imed.ill} de la façons
suivante. \`A chaque tournoi $T = (V,A)$ est associé son graphe d’indécomposabilité $I(T)$ défini sur $V$ comme suit. Pour tous $x\neq y\in V$, $\{x,y\}$ est une arête de $I(T)$ si $T-\{x,y \}$ est indécomposable. Ce graphe est un outil important dans dans l'étude de l'indécomposabilté critique comme le montre les deux résultats suivants.
\begin{lemme}~\cite{BI}\label{pi} Soit $T=(V,A)$ un tournoi indécomposable tel que $\vert V\vert \geqslant 5$. Pour tout sommet critique $x$ de $V$, $\vert N_{I(T)}(x)\vert \leqslant 2$. De plus,
\begin{enumerate}
\item si $N_{I(T)}(x)=\lbrace y\rbrace$, où $y\in V$, alors $V\setminus \lbrace x,y\rbrace$ est un intervalle de $T-x$;
\item si $N_{I(G)}(x)=\lbrace y,z\rbrace$, où $y\neq z \in V$, alors $\lbrace y,z\rbrace$ est un intervalle de $T- x$.
\end{enumerate}
\end{lemme}
\begin{corollaire}\cite{BI}\label{xxx}
 Soit $T$ un tournoi indécomposable et non critique à au moins $7$ sommets. Si $C$ est une composante connexe du graphe d'indécomposabilité $I(T)$ de $T$ telle que $\vert C \vert\geqslant 2$, alors $C$ contient un sommet non critique de $T$.
\end{corollaire}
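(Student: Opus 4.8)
The plan is to argue by contradiction. Suppose every vertex of $C$ is critical. By Lemma~\ref{pi} each such vertex has at most two neighbours in $I(T)$; since $C$ is a connected component of $I(T)$ with $|C|\geqslant 2$, all these neighbours stay inside $C$, so $I(T)[C]$ is a path or a cycle. If $C=V(T)$, then every vertex of $T$ is critical, i.e. $T$ is critical, contradicting the hypothesis; hence I may assume $R:=V(T)\setminus C\neq\varnothing$. It then suffices to produce a nontrivial interval $J$ of $T$ (that is, $2\leqslant|J|\leqslant|V(T)|-1$), which contradicts the indecomposability of $T$.

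The construction feeds on the two cases of Lemma~\ref{pi}. An interior vertex $x_i$, with $N_{I(T)}(x_i)=\{x_{i-1},x_{i+1}\}$, gives that $\{x_{i-1},x_{i+1}\}$ is an interval of $T-x_i$, so $x_{i-1}$ and $x_{i+1}$ relate identically to every vertex outside $\{x_{i-1},x_i,x_{i+1}\}$; an endpoint $x_1$ of a path, with $N_{I(T)}(x_1)=\{x_2\}$, gives that $V(T)\setminus\{x_1,x_2\}$ is an interval of $T-x_1$, so $x_2$ relates uniformly to all of $V(T)\setminus\{x_1,x_2\}$. For a cycle $C$ I would propagate the identities $x_{i-1}\equiv x_{i+1}$. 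Each such identity holds at every $r\in R$, so when $C$ has odd length the step-two propagation merges all vertices of $C$ into a single class relative to each $r$; then every $r$ is uniform on $C$, and $C$ itself is a nontrivial interval. When $C$ has even length the even-indexed vertices form a class: deleting from their cycle the single identity that is lost at a prescribed outside vertex still leaves them connected, so every vertex of $R$ and every odd-indexed vertex is uniform on the set $S$ of even-indexed vertices, making $S$ a nontrivial interval ($2\leqslant|S|<|V(T)|$).

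For a path $x_1,\dots,x_k$ I would start from the endpoints: each of $x_2$ and $x_{k-1}$ relates uniformly to all but one vertex, hence, after passing to $T^{\star}$ if needed, dominates $V(T)\setminus\{x_1,x_2\}$ (resp. $V(T)\setminus\{x_{k-1},x_k\}$); were it to dominate its exceptional vertex as well it would be a source, and its complement in $V(T)$ would be an interval. Combining these almost-dominant vertices with the interior identities — whose propagation along the now non-cyclic path is governed by the parity of $k$ — I expect to reach, in each configuration, one of: a genuine source or sink (interval $=$ its complement), a common uniform behaviour of all vertices of $C$ towards $R$ (interval $R$), or an outright contradiction. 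I anticipate the path case to be the main obstacle: for odd $k\geqslant 5$ the two almost-dominant vertices $x_2,x_{k-1}$ lie in the same parity class and are tied together by the interior identities, yet the endpoint data force them to relate oppositely to $x_1$, and it is precisely this clash that must be extracted; for even $k$ the same analysis instead has to be steered into making $R$ an interval. The delicate points are tracking which endpoint vertex is almost-dominant versus almost-dominated, and checking that the relevant identity-graphs stay connected after the local exceptions are deleted; by contrast the cycle case is clean once the residue-class propagation is set up.
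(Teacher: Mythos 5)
The paper does not prove Corollaire~\ref{xxx}: it is quoted from \cite{BI} without proof, so there is no internal argument to compare against. Judged on its own merits, your strategy is the standard one and is largely sound: assuming all of $C$ critical, Lemme~\ref{pi} does force $I(T)[C]$ to be a path or a cycle, and your cycle case is complete and correct (odd length: every $r\in V(T)\setminus C$ is uniform on $C$, so $C$ is a nontrivial interval; even length: the even parity class remains connected under the surviving identities and is a nontrivial interval).

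The genuine gap is in the path case, which you only sketch, and specifically in the reduction ``it suffices to produce a nontrivial interval of $T$ once $R:=V(T)\setminus C\neq\varnothing$''. That reduction fails when $\vert R\vert=1$. Carrying out your propagation for a path $x_1,\dots,x_k$ with $x_2\longrightarrow V(T)\setminus\{x_1,x_2\}$ (after dualizing), one finds $V(T)\setminus\{x_{k-1},x_k\}\longrightarrow x_{k-1}$, hence for every $r\in R$ the even class dominates $r$ and $r$ dominates the odd class; this kills odd $k\geqslant 5$ outright and shows $R$ is an interval of $T$, a contradiction only if $\vert R\vert\geqslant 2$. When $\vert R\vert=1$ the interval $R$ is trivial and no nontrivial interval of $T$ can exist: this configuration is exactly the shape of $W_{2n+1}$ (path component $\{0,\dots,2n-1\}$ plus the isolated vertex $2n$), which is indecomposable. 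The contradiction there is of a different nature: one must show that the lone vertex $r$ outside $C$ is itself critical --- for instance because $C\setminus\{x_1,x_2\}$ is a nontrivial interval of $T-r$ --- so that $T$ is critical, contradicting the hypothesis. Your write-up never confronts this subcase, and your description of the odd-$k$ ``clash'' misplaces it (the incompatibility is seen from a vertex of $R$, not from $x_1$, to which $x_2$ and $x_{k-1}$ in fact relate identically). Also, passing to $T^{\star}$ normalizes the direction at only one endpoint; the direction at the other is then forced, not free. These points are all repairable, but as written the path case is a plan with a hole rather than a proof.
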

Notons maintenant le fait suivant. 
\begin{Fait}\label{yyyy}
Tout tournoi $T$ à $2n$ sommets, où $n\geqslant 2$, abritant un tournoi transitif à $2n-1$ sommets, est décomposable.
\end{Fait}
\begin{proof}[\textbf{Preuve}]
On peut supposer que $V(T)=\lbrace 0,\ldots,2n-1\rbrace$ et que $V(T)-(2n-1)$ est l'ordre total usuel sur $\lbrace 0,\ldots,2n-2\rbrace$. Supposons que $T$ est indécomposable. Comme $\lbrace 1,\ldots,2n-1\rbrace$ n'est pas un intervalle de $T$, alors $2n-1\longrightarrow 0$. De plus, pour tout $i\in\lbrace 0,\ldots,2n-3\rbrace$, $T[\{2n-1,i\}]\not\simeq T[\{2n-1,i+1\}]$, car  autrement $\lbrace i,i+1\rbrace$ sera un intervalle non trivial du tournoi indécomposable $T$. Il s'ensuit que $(2n-1)\longrightarrow (2n-2)$. Ainsi, $\lbrace 0,\ldots,2n-3\rbrace \cup\lbrace 2n-1\rbrace $ est un intervalle non trivial de $T$. Contradiction.
\end{proof}
 Afin de rappeler la caractérisation des tournois critiques, nous introduisons, pour tout entier $n\geqslant 2$, les tournois $T_{2n+1}$, $U_{2n+1}$ et $W_{2n+1}$ définis sur $\{0, . . . , 2n\}$ comme suit. 

\begin{itemize}
\item $T_{2n+1}[\lbrace 0,\ldots ,n\rbrace]$ est l'ordre total usuel sur $\lbrace 0,\ldots ,n\rbrace$, $T_{2n+1}[\lbrace n+1,\ldots ,2n\rbrace]$ est l'ordre total usuel sur $\lbrace n+1,\ldots ,2n\rbrace$ et pour tout $i\in{\lbrace 0,\ldots,n-1\rbrace}$, $\lbrace i+1,\ldots,n\rbrace \longrightarrow i+n+~1 \longrightarrow \lbrace 0,\ldots ,i\rbrace$. (voir Figure \ref{T_{2n+1}});
\item  $U_{2n+1}$ est obtenu à partir de $T_{2n+1}$, en inversant tous les arcs $(i,j)\in A(T_{2n+1})$ tels que $n+1\leqslant i< j \leqslant 2n$. (voir Figure \ref{U_{2n+1}});
\item $W_{2n+1}[\lbrace 0,\ldots 2n-1\rbrace]$ est l'ordre total usuel sur $\lbrace 0,\ldots ,2n-1\rbrace$ et\\
 $\lbrace 2i-1:\; 1\leqslant i\leqslant n \rbrace\longrightarrow 2n\longrightarrow \lbrace 2i:\; 0\leqslant i\leqslant n-1\rbrace$. (voir Figure \ref{W_{2n+1}}).
\end{itemize}

\begin{figure}[h]
\begin{center}
\definecolor{ccwwff}{rgb}{0.8,0.4,1.}
\begin{tikzpicture}[line cap=round,line join=round,>=triangle 45,x=1cm,y=0.75cm]
\draw [line width=0.4pt,color=ccwwff] (2.,0.)-- (3.5,0.);
\draw [line width=0.4pt,color=ccwwff] (2.84,0.) -- (2.75,-0.12);
\draw [line width=0.4pt,color=ccwwff] (2.84,0.) -- (2.75,0.12);
\draw (2.,0.)-- (3.5,0.);
\draw [line width=0.4pt] (5.5,0.)-- (7.,0.);
\draw [line width=0.4pt] (6.34,0.) -- (6.25,-0.12);
\draw [line width=0.4pt] (6.34,0.) -- (6.25,0.12);
\draw [line width=0.4pt] (9.,0.)-- (10.5,0.);
\draw [line width=0.4pt] (9.84,0.) -- (9.75,-0.12);
\draw [line width=0.4pt] (9.84,0.) -- (9.75,0.12);
\draw (2.,0.)-- (3.5,0.);
\draw [line width=0.4pt] (3.5,0.)-- (4.44,0.);
\draw [line width=0.4pt] (4.06,0.) -- (3.97,-0.12);
\draw [line width=0.4pt] (4.06,0.) -- (3.97,0.12);
\draw [line width=0.4pt] (7.,0.)-- (7.86,0.);
\draw [line width=0.4pt] (7.52,0.) -- (7.43,-0.12);
\draw [line width=0.4pt] (7.52,0.) -- (7.43,0.12);
\draw (9.,0.)-- (10.5,0.);
\draw [line width=0.4pt] (10.5,0.)-- (9.8,2.);
\draw [line width=0.4pt] (10.12026846177447,1.0849472520729415) -- (10.263263002763923,1.039642050967372);
\draw [line width=0.4pt] (10.12026846177447,1.0849472520729415) -- (10.036736997236076,0.960357949032627);
\draw [line width=0.4pt] (2.84,1.98)-- (2.,0.);
\draw [line width=0.4pt] (2.3848505137829448,0.9071476396312261) -- (2.3095301861749693,1.0368659816227392);
\draw [line width=0.4pt] (2.3848505137829448,0.9071476396312261) -- (2.530469813825031,0.9431340183772586);
\draw [line width=0.4pt] (3.5,0.)-- (2.84,1.98);
\draw [line width=0.4pt] (3.1415395010584852,1.075381496824546) -- (3.2838419957660627,1.0279473319220205);
\draw [line width=0.4pt] (3.1415395010584852,1.075381496824546) -- (3.0561580042339385,0.9520526680779796);
\draw [line width=0.4pt] (4.64,0.4)-- (2.84,1.98);
\draw [line width=0.4pt] (3.6723612923671722,1.2493717544777034) -- (3.819162339303607,1.2801849435104367);
\draw [line width=0.4pt] (3.6723612923671722,1.2493717544777034) -- (3.660837660696394,1.0998150564895621);
\draw [line width=0.4pt] (6.26,2.)-- (5.5,0.);
\draw [line width=0.4pt] (5.848030401543879,0.9158694777470482) -- (5.7678259703293975,1.042626131274828);
\draw [line width=0.4pt] (5.848030401543879,0.9158694777470482) -- (5.992174029670602,0.9573738687251712);
\draw [line width=0.4pt] (7.,0.)-- (6.26,2.);
\draw [line width=0.4pt] (6.598769200638714,1.0844075658413095) -- (6.742543421121748,1.0416410658150455);
\draw [line width=0.4pt] (6.598769200638714,1.0844075658413095) -- (6.517456578878252,0.9583589341849524);
\draw [line width=0.4pt] (9.8,2.)-- (9.,0.);
\draw [line width=0.4pt] (9.366574839128132,0.9164370978203267) -- (9.288582797093772,1.0445668811624926);
\draw [line width=0.4pt] (9.366574839128132,0.9164370978203267) -- (9.511417202906234,0.9554331188375074);
\draw [line width=0.4pt] (9.8,2.)-- (8.24,0.38);
\draw [line width=0.4pt] (8.957572248164073,1.1251711807857663) -- (8.933561574381022,1.2732370024479036);
\draw [line width=0.4pt] (8.957572248164073,1.1251711807857663) -- (9.106438425618979,1.1067629975520954);
\draw [line width=0.4pt] (6.26,2.)-- (5.16,0.4);
\draw [line width=0.4pt] (5.659012405940164,1.1258362268220565) -- (5.611114969096074,1.2679834587464478);
\draw [line width=0.4pt] (5.659012405940164,1.1258362268220565) -- (5.808885030903923,1.132016541253552);
\draw [line width=0.4pt] (7.88,0.34)-- (6.26,2.);
\draw [line width=0.4pt] (7.007141156543065,1.2344109136657477) -- (7.155881218220998,1.2538117912759132);
\draw [line width=0.4pt] (7.007141156543065,1.2344109136657477) -- (6.984118781779,1.0861882087240855);
\draw [line width=0.4pt] (2.,-0.74)-- (3.,-0.74);
\draw [line width=0.4pt] (2.59,-0.74) -- (2.5,-0.86);
\draw [line width=0.4pt] (2.59,-0.74) -- (2.5,-0.62);
\draw [line width=0.4pt] (2.,-0.38)-- (2.,-0.74);
\draw [line width=0.4pt] (2.84,1.98)-- (4.,2.);
\draw [line width=0.4pt] (3.50998662604941,1.9915514935525762) -- (3.422068658070102,1.8700178319341207);
\draw [line width=0.4pt] (3.50998662604941,1.9915514935525762) -- (3.4179313419298984,2.1099821680658795);
\draw [line width=0.4pt] (5.2,2.)-- (6.26,2.);
\draw [line width=0.4pt] (5.82,2.) -- (5.73,1.88);
\draw [line width=0.4pt] (5.82,2.) -- (5.73,2.12);
\draw [line width=0.4pt] (6.26,2.)-- (7.28,1.98);
\draw [line width=0.4pt] (6.859982703949107,1.9882356332559) -- (6.767647511007867,1.870023061401193);
\draw [line width=0.4pt] (6.859982703949107,1.9882356332559) -- (6.772352488992135,2.109976938598807);
\draw [line width=0.4pt] (8.74,2.)-- (9.8,2.);
\draw [line width=0.4pt] (9.36,2.) -- (9.27,1.88);
\draw [line width=0.4pt] (9.36,2.) -- (9.27,2.12);
\draw [line width=0.4pt] (2.86,3.)-- (4.,3.);
\draw [line width=0.4pt] (3.52,3.) -- (3.43,2.88);
\draw [line width=0.4pt] (3.52,3.) -- (3.43,3.12);
\draw [line width=0.4pt] (2.86,2.75)-- (2.86,3.);
\begin{scriptsize}
\draw [fill=black] (2.,0.) circle (2.pt);
\draw[color=black] (1.9,-0.28) node {$0$};
\draw [fill=black] (3.5,0.) circle (2.pt);
\draw[color=black] (3.5,-0.28) node {$1$};
\draw [fill=black] (7.,0.) circle (2.pt);
\draw[color=black] (7,-0.28) node {$i+1$};
\draw [fill=black] (5.5,0.) circle (2.pt);
\draw[color=black] (5.5,-0.28) node {$i$};
\draw [fill=black] (9.,0.) circle (2.pt);
\draw[color=black] (9,-0.28) node {$n-1$};
\draw [fill=black] (10.5,0.) circle (2.0pt);
\draw[color=black] (10.5,-0.28) node {$n$};
\draw [fill=black] (4.76,0.) circle (0.5pt);
\draw [fill=black] (5.,0.) circle (0.5pt);
\draw [fill=black] (8.22,0.) circle (0.5pt);
\draw [fill=black] (8.54,0.) circle (0.5pt);
\draw [fill=black] (2.84,1.98) circle (2.pt);
\draw[color=black] (2.84,2.34) node {$n+1$};
\draw [fill=black] (6.26,2.) circle (2.pt);
\draw[color=black] (6.26,2.36) node {$i+n+1$};
\draw [fill=black] (9.8,2.) circle (2.pt);
\draw[color=black] (9.8,2.36) node {$2n$};
\draw [fill=black] (4.36,2.) circle (0.5pt);
\draw [fill=black] (4.8,2.) circle (0.5pt);
\draw [fill=black] (7.82,2.) circle (0.5pt);
\draw [fill=black] (8.36,2.) circle (0.5pt);
\end{scriptsize}
\end{tikzpicture}
\vspace{0.2cm} \caption{\textbf{Le Tournoi critique $T_{2n+1}$}}
\label{T_{2n+1}}
\end{center}
\end{figure}
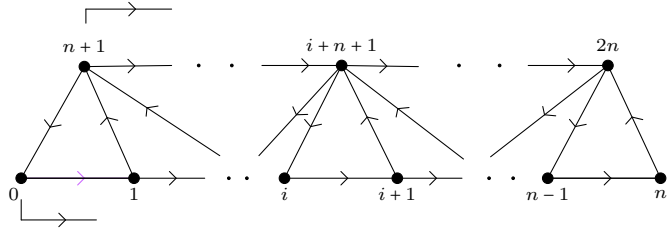
\begin{figure}[h]
\begin{center}
\begin{tikzpicture}[line cap=round,line join=round,>=triangle 45,x=1cm,y=0.75cm]
\draw [line width=0.4pt] (5.5,0.)-- (7.,0.);
\draw [line width=0.4pt] (6.34,0.) -- (6.25,-0.12);
\draw [line width=0.4pt] (6.34,0.) -- (6.25,0.12);
\draw [line width=0.4pt] (9.,0.)-- (10.5,0.);
\draw [line width=0.4pt] (9.84,0.) -- (9.75,-0.12);
\draw [line width=0.4pt] (9.84,0.) -- (9.75,0.12);
\draw [line width=0.4pt] (3.5,0.)-- (4.44,0.);
\draw [line width=0.4pt] (4.06,0.) -- (3.97,-0.12);
\draw [line width=0.4pt] (4.06,0.) -- (3.97,0.12);
\draw [line width=0.4pt] (7.,0.)-- (7.86,0.);
\draw [line width=0.4pt] (7.52,0.) -- (7.43,-0.12);
\draw [line width=0.4pt] (7.52,0.) -- (7.43,0.12);
\draw (9.,0.)-- (10.5,0.);
\draw [line width=0.4pt] (10.5,0.)-- (9.8,2.);
\draw [line width=0.4pt] (10.120268461774472,1.0849472520729413) -- (10.263263002763923,1.039642050967373);
\draw [line width=0.4pt] (10.120268461774472,1.0849472520729413) -- (10.03673699723608,0.9603579490326266);
\draw [line width=0.4pt] (2.84,1.98)-- (2.,0.);
\draw [line width=0.4pt] (2.384850513782944,0.9071476396312258) -- (2.309530186174969,1.036865981622739);
\draw [line width=0.4pt] (2.384850513782944,0.9071476396312258) -- (2.5304698138250306,0.9431340183772583);
\draw [line width=0.4pt] (3.5,0.)-- (2.84,1.98);
\draw [line width=0.4pt] (3.141539501058485,1.075381496824546) -- (3.283841995766062,1.0279473319220203);
\draw [line width=0.4pt] (3.141539501058485,1.075381496824546) -- (3.056158004233938,0.9520526680779793);
\draw [line width=0.4pt] (4.64,0.4)-- (2.84,1.98);
\draw [line width=0.4pt] (3.6723612923671727,1.2493717544777034) -- (3.8191623393036074,1.2801849435104367);
\draw [line width=0.4pt] (3.6723612923671727,1.2493717544777034) -- (3.6608376606963953,1.099815056489562);
\draw [line width=0.4pt] (6.26,2.)-- (5.5,0.);
\draw [line width=0.4pt] (5.8480304015438795,0.9158694777470479) -- (5.767825970329398,1.0426261312748275);
\draw [line width=0.4pt] (5.8480304015438795,0.9158694777470479) -- (5.992174029670603,0.9573738687251708);
\draw [line width=0.4pt] (7.,0.)-- (6.26,2.);
\draw [line width=0.4pt] (6.598769200638716,1.0844075658413093) -- (6.7425434211217485,1.0416410658150452);
\draw [line width=0.4pt] (6.598769200638716,1.0844075658413093) -- (6.517456578878254,0.9583589341849521);
\draw [line width=0.4pt] (9.8,2.)-- (9.,0.);
\draw [line width=0.4pt] (9.366574839128134,0.9164370978203263) -- (9.288582797093772,1.0445668811624924);
\draw [line width=0.4pt] (9.366574839128134,0.9164370978203263) -- (9.511417202906234,0.9554331188375071);
\draw [line width=0.4pt] (9.8,2.)-- (8.24,0.38);
\draw [line width=0.4pt] (8.957572248164073,1.125171180785766) -- (8.933561574381024,1.2732370024479034);
\draw [line width=0.4pt] (8.957572248164073,1.125171180785766) -- (9.10643842561898,1.1067629975520952);
\draw [line width=0.4pt] (6.26,2.)-- (5.16,0.4);
\draw [line width=0.4pt] (5.659012405940165,1.1258362268220565) -- (5.611114969096076,1.2679834587464478);
\draw [line width=0.4pt] (5.659012405940165,1.1258362268220565) -- (5.808885030903924,1.132016541253552);
\draw [line width=0.4pt] (7.88,0.34)-- (6.26,2.);
\draw [line width=0.4pt] (7.0071411565430655,1.2344109136657475) -- (7.155881218220999,1.2538117912759132);
\draw [line width=0.4pt] (7.0071411565430655,1.2344109136657475) -- (6.984118781779004,1.0861882087240853);
\draw [line width=0.4pt] (2.,-0.74)-- (3.,-0.74);
\draw [line width=0.4pt] (2.59,-0.74) -- (2.5,-0.86);
\draw [line width=0.4pt] (2.59,-0.74) -- (2.5,-0.62);
\draw [line width=0.4pt] (2.,-0.38)-- (2.,-0.74);
\draw [line width=0.4pt] (2.,0.)-- (3.5,0.);
\draw [line width=0.4pt] (2.84,0.) -- (2.75,-0.12);
\draw [line width=0.4pt] (2.84,0.) -- (2.75,0.12);
\draw [line width=0.4pt] (4.,2.)-- (2.84,1.98);
\draw [line width=0.4pt] (3.3300133739505906,1.988448506447424) -- (3.4179313419299002,2.10998216806588);
\draw [line width=0.4pt] (3.3300133739505906,1.988448506447424) -- (3.4220686580701023,1.87001783193412);
\draw [line width=0.4pt] (6.26,2.)-- (5.2,2.);
\draw [line width=0.4pt] (5.64,2.) -- (5.73,2.12);
\draw [line width=0.4pt] (5.64,2.) -- (5.73,1.88);
\draw [line width=0.4pt] (7.28,1.98)-- (6.26,2.);
\draw [line width=0.4pt] (6.680017296050895,1.9917643667440992) -- (6.7723524889921345,2.1099769385988063);
\draw [line width=0.4pt] (6.680017296050895,1.9917643667440992) -- (6.767647511007866,1.870023061401192);
\draw [line width=0.4pt] (9.8,2.)-- (8.74,2.);
\draw [line width=0.4pt] (9.18,2.) -- (9.27,2.12);
\draw [line width=0.4pt] (9.18,2.) -- (9.27,1.88);
\draw [line width=0.4pt] (9.8,2.86)-- (8.78,2.86);
\draw [line width=0.4pt] (9.2,2.86) -- (9.29,2.98);
\draw [line width=0.4pt] (9.2,2.86) -- (9.29,2.74);
\draw [line width=0.4pt] (9.8,2.5)-- (9.8,2.86);
\begin{scriptsize}
\draw [fill=black] (2.,0.) circle (2.pt);
\draw[color=black] (1.9,-0.28) node {$0$};
\draw [fill=black] (3.5,0.) circle (2.pt);
\draw[color=black] (3.5,-0.28) node {$1$};
\draw [fill=black] (7.,0.) circle (2.pt);
\draw[color=black] (7.,-0.28) node {$i+1$};
\draw [fill=black] (5.5,0.) circle (2.pt);
\draw[color=black] (5.5,-0.28) node {$i$};
\draw [fill=black] (9.,0.) circle (2.pt);
\draw[color=black] (9.,-0.28) node {$n-1$};
\draw [fill=black] (10.5,0.) circle (2.0pt);
\draw[color=black] (10.5,-0.28) node {$n$};
\draw [fill=black] (4.76,0.) circle (0.5pt);
\draw [fill=black] (5.,0.) circle (0.5pt);
\draw [fill=black] (8.22,0.) circle (0.5pt);
\draw [fill=black] (8.54,0.) circle (0.5pt);
\draw [fill=black] (2.84,1.98) circle (2.pt);
\draw[color=black] (2.84,2.34) node {$n+1$};
\draw [fill=black] (6.26,2.) circle (2.pt);
\draw[color=black] (6.26,2.36) node {$i+n+1$};
\draw [fill=black] (9.8,2.) circle (2.pt);
\draw[color=black] (9.8,2.36) node {$2n$};
\draw [fill=black] (4.36,2.) circle (0.5pt);
\draw [fill=black] (4.8,2.) circle (0.5pt);
\draw [fill=black] (7.82,2.) circle (0.5pt);
\draw [fill=black] (8.36,2.) circle (0.5pt);
\end{scriptsize}
\end{tikzpicture}
\vspace{0.2cm} \caption{\textbf{Le Tournoi critique $U_{2n+1}$}}
\label{U_{2n+1}}
\end{center}
\end{figure}~
~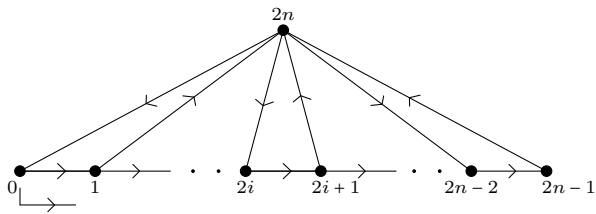
\begin{figure}[h]
\begin{center}
\begin{tikzpicture}[line cap=round,line join=round,>=triangle 45,x=1.cm,y=0.75cm]
\draw [line width=0.4pt] (3.,0.)-- (4.,0.);
\draw [line width=0.4pt] (6.,0.)-- (7.,0.);
\draw [line width=0.4pt] (9.,0.)-- (10.,0.);
\draw [line width=0.4pt] (9.59,0.) -- (9.5,-0.12);
\draw [line width=0.4pt] (9.59,0.) -- (9.5,0.12);
\draw (6.,0.)-- (7.,0.);
\draw (6.605,0.) -- (6.5,-0.135);
\draw (6.605,0.) -- (6.5,0.135);
\draw (3.,0.)-- (4.,0.);
\draw (3.605,0.) -- (3.5,-0.135);
\draw (3.605,0.) -- (3.5,0.135);
\draw [line width=0.4pt] (6.5,2.5)-- (3.,0.);
\draw [line width=0.4pt] (4.676763987591394,1.1976885625652811) -- (4.680251416753708,1.3476480165448081);
\draw [line width=0.4pt] (4.676763987591394,1.1976885625652811) -- (4.819748583246292,1.1523519834551923);
\draw [line width=0.4pt] (4.,0.)-- (6.5,2.5);
\draw [line width=0.4pt] (5.31363961030679,1.313639610306789) -- (5.334852813742386,1.1651471862576142);
\draw [line width=0.4pt] (5.31363961030679,1.313639610306789) -- (5.165147186257615,1.3348528137423856);
\draw [line width=0.4pt] (6.5,2.5)-- (6.,0.);
\draw [line width=0.4pt] (6.232349547837564,1.1617477391878168) -- (6.132330318917091,1.2735339362165814);
\draw [line width=0.4pt] (6.232349547837564,1.1617477391878168) -- (6.367669681082912,1.2264660637834175);
\draw [line width=0.4pt] (7.,0.)-- (6.5,2.5);
\draw [line width=0.4pt] (6.732349547837564,1.3382522608121825) -- (6.867669681082912,1.2735339362165818);
\draw [line width=0.4pt] (6.732349547837564,1.3382522608121825) -- (6.632330318917091,1.226466063783418);
\draw [line width=0.4pt] (10.,0.)-- (6.5,2.5);
\draw [line width=0.4pt] (8.176763987591395,1.3023114374347187) -- (8.319748583246291,1.3476480165448081);
\draw [line width=0.4pt] (8.176763987591395,1.3023114374347187) -- (8.180251416753709,1.1523519834551923);
\draw [line width=0.4pt] (6.5,2.5)-- (9.,0.);
\draw [line width=0.4pt] (7.813639610306788,1.1863603896932107) -- (7.665147186257614,1.1651471862576142);
\draw [line width=0.4pt] (7.813639610306788,1.1863603896932107) -- (7.834852813742384,1.3348528137423856);
\draw [line width=0.4pt] (4.,0.)-- (5.,0.);
\draw [line width=0.4pt] (4.59,0.) -- (4.5,-0.12);
\draw [line width=0.4pt] (4.59,0.) -- (4.5,0.12);
\draw [line width=0.4pt] (7.,0.)-- (8.,0.);
\draw [line width=0.4pt] (7.59,0.) -- (7.5,-0.12);
\draw [line width=0.4pt] (7.59,0.) -- (7.5,0.12);
\draw [line width=0.4pt] (3.,-0.6)-- (3.74,-0.6);
\draw [line width=0.4pt] (3.46,-0.6) -- (3.37,-0.72);
\draw [line width=0.4pt] (3.46,-0.6) -- (3.37,-0.48);
\draw [line width=0.4pt] (3.,-0.6)-- (3.,-0.3);
\begin{scriptsize}
\draw [fill=black] (3.,0.) circle (2pt);
\draw[color=black] (2.9,-0.28) node {$0$};
\draw [fill=black] (4.,0.) circle (2pt);
\draw[color=black] (4.,-0.28) node {$1$};
\draw [fill=black] (6.,0.) circle (2pt);
\draw[color=black] (6.,-0.28) node {$2i$};
\draw [fill=black] (7.,0.) circle (2pt);
\draw[color=black] (7.2,-0.28) node {$2i+1$};
\draw [fill=black] (9.,0.) circle (2pt);
\draw[color=black] (9.,-0.28) node {$2n-2$};
\draw [fill=black] (10.,0.) circle (2pt);
\draw[color=black] (10.3,-0.28) node {$2n-1$};
\draw [fill=black] (6.5,2.5) circle (2pt);
\draw[color=black] (6.5,2.78) node {$2n$};
\draw [fill=black] (5.3,0.) circle (0.5pt);
\draw [fill=black] (5.64,0.) circle (0.5pt);
\draw [fill=black] (8.24,0.) circle (0.5pt);
\draw [fill=black] (8.58,0.) circle (0.5pt);
\end{scriptsize}
\end{tikzpicture}  
\vspace{0.2cm} \caption{\textbf{Le Tournoi critique $W_{2n+1}$}}
\label{W_{2n+1}}
\end{center}
\end{figure}

Les digraphes critiques ont été caractérisés indépendamment par Schmerl et Trotter en 1993 \cite{S.T}, et par Bonizonni en 
1994 \cite{Boniz}. En 2009,  Boudabbous et Ille \cite{BI} ont retrouvé cette caractérisation en utilisant   la notion de graphe d'indécomposabilité. Les auteurs de \cite{BI} ont trouvé que pour chaque entier $n \geqslant 5 $, il existe exactement 5 ou 6 digraphes critiques d'ordre $n$, suivant que $n$ impair ou pair respectivement. Ces résultats s'énoncent comme suit dans le cas des tournois.
\begin{theoreme}\cite{BI}\label{g.indec des tournois crit}
\`A isomorphisme près, les tournois critiques sont $T_{2n+1}$, $U_{2n+1}$ et $W_{2n+1}$ où $n\geqslant 2$. De plus, en considérant la permutation\\
 $f_{2n+1}:x\mapsto (n+1)x $ de $\mathbb{Z}_{2n+1}$, les assertions suivantes sont vérifiées. 
\begin{enumerate}
\item  $I(T_{2n+1}) $ est un cycle. Plus précisément, $I(T_{2n+1})=f_{2n+1}(C_{2n+1})$;
\item $I(U_{2n+1}) $ est un chemin. Plus précisément, $I(U_{2n+1}) =f_{2n+1}(P_{2n+1})$;
\item $I(W_{2n+1}) -2n = P_{2n}  $ et $2n$ est un sommet isolé de  $I(W_{2n+1})$.
\end{enumerate}
\end{theoreme}
Nous ajoutons le lemme suivant.
 \begin{lemme}\label{T abrite T}
 Soit un  entier $n\geqslant2$. \`A isomorphisme près, les sous-tournois indécomposables de $T_{2n+1}$ (resp. $U_{2n+1}$, $W_{2n+1}$), à au moins $5$ sommets, sont les tournois $T_{2m+1}$ (resp. $U_{2m+1}$, $W_{2m+1}$ ), où $2\leqslant m\leqslant n$.
 \end{lemme}  
\begin{proof}[\textbf{Preuve}]
Considérons un tournoi non critique $D_{2n+1}=T_{2n+1},U_{2n+1}$ ou $W_{2n+1}$, où $n\geqslant 3$. Il suffit de vérifier que pour toute arête $X $ du graphe d'indécomposabilité $I(D_{2n+1})$ de $D_{2n+1}$, si $D_{2n+1}=T_{2n+1}$ (resp. $U_{2n+1} , W_{2n+1}$ ), alors $D_{2n+1}-X\simeq T _{2n-1}$ (resp. $U_{2n-1} , W_{2n-1}$ ).\\
Supposons d'abord que $D_{2n+1} = W_{2n+1}$. D'après le théorème~\ref{g.indec des tournois crit}, $X=\lbrace i, i+1\rbrace$, où $i\in\lbrace 0,\ldots, 2n-2\rbrace$. L'application :  \begin{equation*}
\begin{array}{lrcl}
 &\lbrace 0,\ldots, 2n\rbrace\setminus\lbrace i,i+1\rbrace &\longrightarrow &\lbrace 0,\ldots, 2n-2\rbrace  \\  &k&\longmapsto &
$$ \left \lbrace\begin{array}{ll} k & \text{si} \; \; 0\leqslant k \leqslant i-1 \\
k-2 &\text{si}\; \;  i+2\leqslant k \leqslant 2n, 
\end{array} \right.$$    
\end{array}
\end{equation*}
est un isomorphisme de $W_{2n+1}- X$ sur $W_{2n-1}$.\\
Supposons maintenant que $D_{2n+1}= T_{2n+1}$. D'après le théorème \ref{g.indec des tournois crit}, $X =\lbrace i,i~+~n~+~1~\rbrace$, où $i\in\lbrace 0,\ldots, 2n\rbrace$. Comme la permutation de $\mathbb{Z}_{2n+1}$, définie par $i\longmapsto i+1$ est un automorphisme de $T_{2n+1}$, on peut supposer que $i=0$, c'est à dire  $X=\lbrace 0,n+1\rbrace$. L'application : 
\begin{equation*}
\begin{array}{lrcl}
 &\lbrace 0,\ldots, 2n\rbrace\setminus\lbrace 0,n+1\rbrace &\longrightarrow &\lbrace 0,\ldots, 2n-2\rbrace  \\  &k&\longmapsto &
$$ \left \lbrace\begin{array}{ll} k-1 & \text{si} \; \; 1\leqslant k \leqslant n \\
k-2 &\text{si}\; \;  n+2\leqslant k \leqslant 2n, 
\end{array} \right.$$    
\end{array}
\end{equation*}
est un isomorphisme de $T_{2n+1}-X$ sur $T_{2n-1}$.\\
Supposons enfin, que $D_{2n+1}=U_{2n+1}$. D'après le théorème \ref{g.indec des tournois crit}, $X=\lbrace i,n+i\rbrace$ ou $\lbrace i-1,n+i\rbrace$, où $i\in\lbrace 1,\ldots,n\rbrace$. Dans le cas où $X=\lbrace i,n+i\rbrace$, l'application :  
\begin{equation*}
\begin{array}{lrcl}
 &\lbrace 0,\ldots, 2n\rbrace\setminus\lbrace i,n+i\rbrace &\longrightarrow &\lbrace 0,\ldots, 2n-2\rbrace  \\  &k&\longmapsto &$$ \left \lbrace\begin{array}{lll} k & \text{si} \; \; 0\leqslant k \leqslant i-1 \\
k-1 &\text{si}\; \;  i+1\leqslant k \leqslant i+n-1 \\
k-2 &\text{si}\; \;  i+n+1\leqslant k \leqslant 2n,
\end{array} \right.$$    
\end{array}
\end{equation*}
est un isomorphisme de $U_{2n+1}-\lbrace i,n+i\rbrace$ sur $U_{2n-1}$. Dans le cas où $X=\lbrace i-1,n+i\rbrace$, l'application : 
\begin{equation*}
\begin{array}{lrcl}
 &\lbrace 0,\ldots, 2n\rbrace\setminus\lbrace i-1,n+i\rbrace &\longrightarrow &\lbrace 0,\ldots, 2n-2\rbrace  \\  &k&\longmapsto &$$ \left \lbrace\begin{array}{lll} k & \text{si} \; \; 0\leqslant k \leqslant i-2 \\
k-1 &\text{si}\; \;  i\leqslant k \leqslant i+n-1 \\
k-2 &\text{si}\; \;  i+n+1\leqslant k \leqslant 2n,
\end{array} \right.$$    
\end{array}
\end{equation*}
est un isomorphisme de $U_{2n+1}-\lbrace i-1,n+i\rbrace$ sur $U_{2n-1}$
~\end{proof}
Disons alors, que deux tournois critiques sont de même type, lorsque l'un s'abrite dans l'autre. Ainsi, d'après le lemme \ref{T abrite T}, deux tournois critiques $T$ et $T'$ sont de même type, lorsqu'il existe deux entiers $m$,$n\geqslant 2$ tels que l'une des assertions suivantes est vérifiée. 
\begin{itemize}
\item $T\simeq T_{2n+1}$ et $T'\simeq T_{2m+1}$;
\item $T\simeq U_{2n+1}$ et $T'\simeq U_{2m+1}$;
\item $T\simeq W_{2n+1}$ et $T'\simeq W_{2m+1}$.
\end{itemize}

\begin{proof}[\textbf{Preuve du théorème~\ref{Gaku +1}}]
On se fixe un tournoi indécomposable $H$ à au moins $5$ sommets. Nous montrons que pour tout tournoi indécomposable et non critique $T$ à au moins $\mid\! H\!\mid +1$ sommets, si $H$ s'abrite dans $T$, alors $H$ s'abrite dans un sous-tournoi indécomposable à $(\mid\! H\!\mid +1)$ sommets de $T$. Nous raisonnons par récurrence sur $\mid\! T\!\mid$. Le résultat est évident si $\mid\! T\!\mid=\mid\! H\!\mid +1$. Considérons un tournoi indécomposable et non critique $T$ qui abrite $H$ et tel que  $\mid\! T\!\mid\geqslant \mid\! H\!\mid+2$. Soit $X\subset V(T)$ tel que $T[X]\simeq H$. Notons le fait suivant: \textit{s'il existe $x\in V(T)\setminus X$ tel que $T-x$ est indécomposable et non critique, la preuve est achevée en appliquant l'hypothèse de récurrence à $T-x$}. Nous discutons suivant la parité de $\mid\! V(T)\setminus X\!\mid$.\\
 Supposons d'abord que $\mid\! V(T)\setminus X\!\mid$ est impair. On a donc  $\mid\! V(T)\setminus X\!\mid\geqslant 3$. En appliquant le théorème~\ref{ER +2, pour tournoi} plusieur fois à partir de $T[X]$, on obtient un sommet $z\in V(T)\setminus X$, tel que $T-z$ est indécomposable. D'après le fait ci-dessus, on peut supposer que $T-z$ est critique. Comme les tournois critique sont d'ordres impairs, alors   
 \begin{equation} 
 \mid\! V(T)\!\mid \text{est pair.}
\label{v pair}
\end{equation}  D'après le lemme~\ref{T abrite T}, $T[X]=(T-z)[X]$ est critique et de même type que le tournoi critique $T-z$ à au moins $7$ sommets.  D'après le corollaire~\ref{cor-1,2sommets}, il existe $u\neq v\in V(T)$ tels que $T-u$ et $T-\lbrace u,v\rbrace $ sont indécomposables. Comme $T-z$ est critique, $z\notin\lbrace u,v\rbrace$. Pour une contradiction, supposons que $u$ est un sommet isolé du graphe d'indécomposabilité $I(T-z)$ de $T-z$. D'après le théorème~\ref{g.indec des tournois crit}, $T-z =W_{2n+1}$, où $n\geqslant 3$, et $T-\lbrace u,z\rbrace$ est un ordre total. Ainsi, $T-\lbrace u,v,z\rbrace$ est aussi un ordre total. D'après (\ref{v pair}), $\mid\! V(T)-\lbrace u,v,z\rbrace\!\mid$  est impair. Donc, d'après le fait~\ref{yyyy}, $T-\lbrace u,v\rbrace$ est décomposable. Contradiction. Ainsi, il existe un sommet $t\in V(T)\setminus \lbrace u,z\rbrace$ tel que $T-\lbrace z,u,t\rbrace$ est  indécomposable et donc critique d'après le lemme~\ref{T abrite T}. Comme $T[X]$ est critique de même type que $T-\lbrace z,u,t\rbrace$ et $\mid\! X\!\mid\;  \leqslant\; \;  \mid\!T\!\mid~-3$, alors  $T[X]$, et donc $H$,  s'abrite dans $T-\lbrace z,u,t\rbrace$. En particulier, $H$ s'abrite dans le tournoi indécomposable et non critique $T-u$.  Nous concluons alors, en appliquant l'hypothèse de récurrence au tournoi indécomposable et non critique $T-u$.\\
  Supposons maintenant que $\mid\! V(T)\setminus X\!\mid$ est pair. Comme $\mid\! V(T)\setminus X\!\mid \geqslant 2$, alors, en appliquant le théorème~\ref{ER +2, pour tournoi} à partir de $T[X]$, on obtient deux sommets $a\neq b \in V(T)\setminus X$ tel que $T-\lbrace a,b\rbrace$ est indécomposable. Nous pouvons supposer que $T-a$ et $T-b$ sont décomposables, puisqu' autrement, il suffit d'appliquer l'hypothèse de récurrence à $T-a$ ou $T-b$. Considérons la composante connexe $C$ de $I(T)$ contenant $\lbrace a,b\rbrace$. D'après le corollaire~\ref{xxx}, $C$ contient un sommet non critique de $T$. Maintenant, parmi les parcours élémentaires de $I(G)$, reliant un sommet $c\in\lbrace a,b\rbrace$ à un sommet non critique de $T$, choisissons un parcours  $P=( a_{1}=c,\ldots,a_{k})$, où $k\geqslant 2$, de longueur $k$ minimum. Le sommet $a_{k}$ est alors non critique. On peut supposer que $c=b$. Par minimalité de la longueur $k$ de $P$, $a\notin\lbrace a_{1},\ldots,a_{k-1}\rbrace$ et $a_{1},\ldots,a_{k-1}$ sont des sommets critiques de $T$. Considérons le parcours élémentaire $Q=( a_{0}~=a,a_{1}~=~b,\ldots,a_{k})$ et soit $i\in\lbrace 1,\ldots,k-1\rbrace$. D'après le lemme \ref{pi}, $\lbrace a_{i-1},a_{i+1}\rbrace$ est un intervalle de $T-a_{i}$. Donc $T-\lbrace a_{i},a_{i-1}\rbrace \simeq T-\lbrace a_{i},a_{i+1}\rbrace$. En faisant varier $i$, on obtient $T-\lbrace a,b\rbrace =T-\lbrace a_{0},a_{1}\rbrace\simeq T-\lbrace a_{k-1},a_{k}\rbrace$. Comme $H$ s'abrite dans $T-\lbrace a,b\rbrace$, alors $H$ s'abrite dans $T-\lbrace a_{k-1},a_{k}\rbrace$ et donc dans $T-a_{k}$. Nous pouvons alors appliquer l'hypothèse de récurrence au tournoi indécomposable et non critique $T-a_{k}$.
\end{proof}
\section{Quelques applications du théorème~\ref{Gaku +1}}\label{chapitre application de Gaku}
Comme tout tournoi indécomposable abrite un tournoi indécomposable à trois  sommets \cite{er} et comme $T_5, U_5$ et $W_5$ sont les seuls tournois indécomposables à 5 sommets, alors en appliquant le théorème~\ref{ER +2, pour tournoi} nous obtenons le fait  suivant.
\begin{Fait}\label{zzz}
Tout tournoi indécomposable à au moins 5 sommets, abrite $T_5, U_5$ ou 

$W_5$.
\end{Fait}
Le fait~\ref{zzz} implique que les tournois indécomposables à au moins $5$ sommets, peuvent être étudiés suivant les tournois indécomposables à $5$ sommets qu'ils abritent. Nous proposons dans cette section, deux résultats avec des nouvelles preuves fournies par le théorème~\ref{Gaku +1}. 

\subsection{\textbf{ Tournois indécomposables à 5 sommets dans un tournoi indécomposable}}
En 2003, B.J. Latka \cite{Latka} a caractérisé les tournois indécomposables n'abritant pas $W_{5}$.  Afin de rappeler cette caractérisation, nous introduisons le tournoi de Paley $\mathscr{P}_{7}$ \index{$\mathscr{P}_{7}$  (tournoi de Paley)} défini sur $\mathbb{Z}_{7}$ par $A(\mathscr{P}_{7})=\lbrace (i,j) :\;  j-i\in\lbrace 1,2,4\rbrace\rbrace$. Notons que pour tous $x,y\in\mathbb{Z}_{7}$, $\mathscr{P}_{7}-x\simeq \mathscr{P}_{7}-y$ et posons $B_{6}= \mathscr{P}_{7}-6$. 
\begin{theoreme}\cite{Latka}\label{Latka}
\`A isomorphisme près, les tournois indécomposables à au moins $5$ sommets et omettant $W_{5}$ sont les tournois $B_{6}$, $\mathscr{P}_{7}$, $T_{2n+1}$ et $U_{2n+1}$, où $n\geqslant 2$.
\end{theoreme}
 En 2006, H. Belkhechine et I. Boudabbous retrouvent une nouvelle preuve du théorème \ref{Latka} comme conséquence du théorème \ref{Houma} \cite{houma} énoncé ci-dessous, dont nous donnons à notre tour une nouvelle preuve utilisant le théorème \ref{Gaku +1}. 

\begin{theoreme} \cite{houma}\label{Houma}
Si un tournoi indécomposable $T$ abrite $T_{5}$, alors 

$T\in\lbrace T_{2n+1}: n\geqslant 2\rbrace$  ou $T$ abrite $U_{5}$ et $W_{5}$. 
\end{theoreme}
\begin{proof}[\textbf{Preuve}]
Soit $T$ un tournoi indécomposable abritant $T_{5}$. Supposons que $T\notin\mathcal{T}=\lbrace T_{2n+1}; n\geqslant 2\rbrace$. Comme seul $T_5, U_5$ et $W_5$ sont les tournois indécomposables à 5 sommets, alors $\mid\! T\!\mid\geqslant 6$. de plus, d'après le lemme \ref{T abrite T}, $T\notin\mathcal{U}=\lbrace U_{2n+1}: n\geqslant 2\rbrace$ et $T\notin\mathcal{W}=\lbrace W_{2n+1}: n\geqslant 2\rbrace$. Le théorème \ref{Gaku +1}, assure l'existence d'un sous-tournoi indécomposable $H$ à $6$ sommets de $T$ tel que $H$ abrite $T_{5}$. On peut supposer que $V(H)=\lbrace 0,1,2,3,4,u\rbrace$ et que $T_{5}$ est un sous-tournoi de $H$. Quitte à remplacer $T$ par ${T}^{\star}$ (puisque $T_{5}$, $U_{5}$ et $W_{5}$ sont autoduaux), on peut supposer que $\mid\! N^{-}_{H}(u)\!\mid\leqslant 2$. Si $\mid\! N^{-}_{H}(u)\!\mid=0$, alors $\lbrace 0,1,2,3,4\rbrace$ est un intervalle non trivial de $H$. Ceci contredit le fait que $H$ est indécomposable. Si $\mid\! N^{-}_{H}(u)\!\mid=1$, sans perte de généralité, on peut supposer que $0\longrightarrow u$. Alors, $u\longrightarrow\lbrace 1,2,3,4\rbrace$ et on a $T[\lbrace 3,4,0,1,u\rbrace]\simeq U_{5}$ et $T[\lbrace 0,u,1,2,4\rbrace]\simeq W_{5}$. D'où $H$ (et donc $T$) abrite $U_{5}$ et $W_{5}$. Enfin, supposons que $\mid\! N^{-}_{H}(u)\!\mid=2$. Alors, il existe $i\in\lbrace 0,1,2,3,4\rbrace$ tel que $ N^{-}_{H}(u)=\lbrace i,i+1\rbrace$ ou $ N^{-}_{H}(u)=\lbrace i,i+2\rbrace$   ($i+1$ et $i+2$ sont pris modulo $5$). Dans le cas où $ N^{-}_{H}(u)=\lbrace i,i+1\rbrace$, sans perte de généralité, on peut prendre $i=0$. Donc $\lbrace 0,1\rbrace\longrightarrow u$ et $u\longrightarrow\lbrace 2,3,4\rbrace$. Alors,  $\lbrace u,2\rbrace$ est un intervalle non trivial de $H$. Ceci contredit l'indécomposabilité de $H$. Il s'ensuit que $N^{-}_{H}(u)=\lbrace i,i+2\rbrace$. Sans perte de généralité, on peut supposer que $\lbrace 0,2\rbrace\longrightarrow u\longrightarrow\lbrace 1,3,4\rbrace$. Dans ce cas, on a $T[\lbrace 2,u,4,1,0\rbrace]\simeq U_{5}$ et $T[\lbrace 2,u,3,4,1\rbrace]\simeq W_{5}$.
\end{proof}
\begin{corollaire}\label{T,uw5 free}
Soit $T$ un tournoi indécomposable à au moins $5$ sommets. Le tournoi $T$ omet $U_5$ et $W_5$ si et seulement si $T\simeq T_{2n+1}$ pour un certain entier~ ~$n\geqslant 2$.
\end{corollaire}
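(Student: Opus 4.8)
La preuve est une conséquence directe du théorème~\ref{Houma}, du lemme~\ref{T abrite T} et du fait~\ref{zzz}. Le plan est de traiter séparément les deux implications de l'équivalence, aucune ne nécessitant de construction nouvelle.

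Pour l'implication directe, je supposerais que $T\simeq T_{2n+1}$ pour un certain entier $n\geqslant 2$ et je montrerais que $T$ omet $U_5$ et $W_5$. D'après le lemme~\ref{T abrite T}, les sous-tournois indécomposables de $T_{2n+1}$ à au moins $5$ sommets sont, à isomorphisme près, exactement les tournois $T_{2m+1}$ avec $2\leqslant m\leqslant n$. Or $U_5$ et $W_5$ sont des tournois indécomposables à $5$ sommets qui ne sont isomorphes à aucun $T_{2m+1}$, les trois tournois $T_5$, $U_5$, $W_5$ étant deux à deux non isomorphes. Par conséquent, $T_{2n+1}$ n'abrite ni $U_5$ ni $W_5$.

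Pour la réciproque, je supposerais que $T$ omet $U_5$ et $W_5$. Comme $T$ est indécomposable à au moins $5$ sommets, le fait~\ref{zzz} assure que $T$ abrite $T_5$, $U_5$ ou $W_5$; puisque $T$ omet $U_5$ et $W_5$, il abrite nécessairement $T_5$. Le théorème~\ref{Houma} fournit alors l'alternative: soit $T\in\lbrace T_{2n+1}: n\geqslant 2\rbrace$, soit $T$ abrite à la fois $U_5$ et $W_5$. Ce dernier cas étant exclu par hypothèse, on conclut que $T\simeq T_{2n+1}$ pour un certain entier $n\geqslant 2$.

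Aucune étape ne présente de difficulté substantielle, l'ossature de la preuve étant déjà portée par les résultats antérieurs. Le seul point à vérifier avec soin est que $U_5$ et $W_5$ ne s'abritent pas dans un $T_{2n+1}$, ce qui découle immédiatement de la description complète des sous-tournois indécomposables donnée par le lemme~\ref{T abrite T}: il suffit d'observer que cette liste ne contient, parmi les tournois à cinq sommets, que $T_5$.
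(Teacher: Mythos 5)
Votre démonstration est correcte et suit essentiellement le même chemin que celle du papier : le lemme~\ref{T abrite T} pour le sens direct, puis le fait~\ref{zzz} combiné au théorème~\ref{Houma} pour la réciproque. Votre rédaction du sens direct est même légèrement plus soignée, puisqu'elle précise que $U_5$ et $W_5$ étant indécomposables, leur abritement contredirait la liste des sous-tournois indécomposables à $5$ sommets donnée par le lemme.
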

\begin{proof}[\textbf{Preuve}]
 Si $T\simeq T_{2n+1}$ pour un certain entier $n\geqslant 2$, alors, d'après le lemme \ref{T abrite T}, tous les sous-tournois à $5$ sommets  de $T$ sont isomorphes à $T_5$. Réciproquement, Soit $T$ un tournoi indécomposable qui omet $U_5$ et $W_5$. D'après le fait~\ref{zzz}, $T$ abrite $T_5$. Il en résulte du théorème \ref{Houma} que $T\simeq T_{2n+1}$.
\end{proof}
D'après le théorème \ref{Houma}, remarquons que tout tournoi indécomposable $T\notin\lbrace T_{2n+1} : n\geqslant 2\rbrace$ qui n'abrite pas $U_{5}$, n'abrite pas aussi $T_{5}$.

\subsection{ \textbf{tournois omettant le diamant $D_{4}$}}
 Le théorème suivant (Gnanvo et Ille \cite{G-Ille}, Lopez et Rauzy \cite{L-R}) donne une caractérisation des tournois indécomposables omettant les tournois $D_{4}$ et ${D_{4}}^{\star}$.
\begin{theoreme} \cite{G-Ille, L-R}
Soit $T$ un tournoi indécomposable à au moins $5$ sommets. Le tournoi $T$ omet $D_{4}$ et ${D_{4}}^{\star}$ si et seulement si $T\simeq T_{2n+1}$ pour un certain entier $n\geqslant2$.
\end{theoreme}
Ce théorème a été amélioré par Gaku Liu \cite{Gakuliu}, en remplaçant \og omet $D_{4}$ et ${D_{4}}^{\star}$~\fg{} par 
\og omet~$D_{4}$ \fg{} . On obtient alors le théorème suivant dont nous donnons une nouvelle preuve. 
\begin{theoreme} \cite{Gakuliu}\label{GL et D4}
\`A isomorphisme près, Les tournois indécomposables à au moins $5$ sommets omettant $D_{4}$ sont les tournois $T_{2n+1}$, où $n\geqslant 2$.
\end{theoreme} 
\begin{proof}[\textbf{Preuve}]
Pour tout $n\geqslant2 $, le tournoi $T_{2n+1}$ omet $D_{4}$ car pour tout $x\in \lbrace 0,\ldots, 2n\rbrace$, le tournoi $T_{2n+1} [N^{+}_{T_{2n+1}}(x)]$ est transitif. Réciproquement, soit $T$  un tournoi indécomposable à au moins $5$ sommets omettant $D_{4}$. Comme $U_5$ et $W_5$ abritent $D_4$, alors $T$ omet $U_5$ et $W_5$. D'après le corollaire \ref{T,uw5 free}, $T\simeq T_{2n+1}$ pour un certain $n\geqslant 2$.  
\end{proof}
\section{Sommets fortement critiques }\label{chapitre sommets fortement critiques}
Le théorème~\ref{Gaku +1} et le corollaire \ref{cor-1,2sommets}, nous amènent à introduire la notion de sommet fortement critique comme suit. \'Etant donné un tournoi indécomposable $T$ à au moins $5$ sommets et un sommet $x\in V(T)$, on dit que $x$ est un sommet \textit{fortement critique } de $T$ lorsque pour toute partie $X\subseteq V(T)$ telle que $x\in X$, $\mid\!X\!\mid \geqslant 5$ et $T[X]$ est indécomposable, $x$ est un sommet critique de $T[X]$. Remarquons alors, que tout sommet fortement critique de $T$ est un sommet critique de $T$. La réciproque est fausse, comme le montre l'exemple suivant. Pour $n\geqslant 5$, considérons le tournoi $F_{n}$ obtenu à partir de l'ordre total $O_{n}$, en inversant les arcs $(i,i+1)$ où $0\leqslant i\leqslant n-2 $. Plus précisément, $V(F_{n})=\lbrace 0,\ldots,n-1\rbrace$ et $A(F_{n})=\lbrace (i,j)\in \lbrace 0,\ldots, n-1\rbrace^{2} : i\leqslant j+2 \rbrace \cup \lbrace (i,i-1): i\in \lbrace 1,\ldots,n-1\rbrace\rbrace $. Le tournoi $F_{n}$ vérifie les propriétés suivantes.
\begin{property}~
\begin{enumerate}
\item Pour tout $n\geqslant5$, le tournoi $F_{n}$ est indécomposable.
\item Pour tout $n\geqslant6$, tous les sommets de $F_{n}$ sont critiques sauf les sommets $0$ et $n-1$.
\item pour $n\geqslant 10$, le tournoi $F_{n}$ n'admet aucun sommet fortement critique. 
\end{enumerate}
\end{property}

\begin{proof}[\textbf{Preuve}]~~

\begin{enumerate}
\item Raisonnons par récurrence sur $n\geqslant 5$. Le tournoi $F_{5} $ est indécomposable car $F_{5}\simeq W_{5}$. Pour $n\geqslant 6$, posons $X=\lbrace 0,\ldots,n-2\rbrace$. Clairement $F_{n}[X]= F_{n-1}$. Donc d'après l'hypothèse de récurrence, $F_{n-1}$ est indécomposable. Comme $(n-1)\longrightarrow (n-2)$ et $\lbrace 0,\ldots,n-3\rbrace \longrightarrow (n-1)$, alors $(n-1)\notin \langle X\rangle$. De plus, pour tout $i\in X$, $(n-1)\notin X(i)$. Ainsi, d'après le lemme \ref{ER}, $(n-1)\in Ext(X)$.
\item Comme $F_{n}-0 \simeq F_{n}-(n-1)=F_{n-1}$, alors, d'après l'assertion $1$, les sommets $0$ et $n-1$ ne sont pas critiques.  De plus, $\lbrace 2,\ldots,n-1\rbrace$ est un intervalle de $F_{n}-1$ et pour tout $i\in\lbrace 2,\ldots,n-2\rbrace$, $\lbrace 0,\ldots i-1\rbrace$ est un intervalle non trivial de $F_{n}-i$. Ainsi, les sommets $1,\ldots,(n-2)$ sont critiques.
\item Soit $n\geqslant 10$ et $k\in \lbrace 0,\ldots,n-1\rbrace$. Au moins un des entiers $k$ ou $n-1-k$ est supérieur ou égal à $5$. Si $k\geqslant 5$, alors $F_{n}[\lbrace 0,\ldots,k\rbrace]=F_{k+1}$ est indécomposable, et $F_{n}[\lbrace 0,\ldots,k-1\rbrace]=F_{k}$ est aussi indécomposable. Il s'ensuit que le sommet $k$ n'est pas un sommet fortement critique de $F_{n}$. Si $n-1-k\geqslant 5$, alors $F_{n}[\lbrace k,\ldots,n-1\rbrace]\simeq F_{n-k}$ est indécomposable, et $F_{n}[\lbrace k+1,\ldots,n-1\rbrace]\simeq F_{n-k-1}$ est aussi indécomposable.  Il s'ensuit que le sommet $k$ n'est pas un sommet fortement critique de $F_{n}$.
 \qedhere \end{enumerate}
\end{proof}
\'Etant donné un tournoi indécomposable $T$, notons  $\mathscr{C}(T)$ \index{ $\mathscr{C}(T)$ } l'ensemble des sommets fortement critiques de $T$ et introduisons l'invariant $f(T)$ égal au nombre de sommets fortement critiques de $T$, c'est-à-dire $f(T)=\mid\! \mathscr{C}(T)\!\mid$. Dans le cas des tournois critiques, d'après le lemme \ref{T abrite T}, tous les sommets de $T$ sont fortement critiques. Dans ce paragraphe, nous montrons que le nombre de sommets fortement critiques d'un tournoi indécomposable et non critique ne dépasse pas $4$. Pour cela, nous commençons par les résultats suivants. 
\begin{lemme}\label{1.}
 Soit $T$ un tournoi indécomposable et non critique à au moins $6$ sommets et $H$ un sous-tournoi indécomposable de $T$, tel que $5\leqslant \mid\!H\!\mid< \mid\!T\!\mid$. Il existe un sous-tournoi $H'$ de $T$ tel que $H'\simeq H$ et les sommets de $V(T)\setminus V(H')$ peuvent être ordonnés $x_{1},x_{2},\ldots,x_{\mid\!T\!\mid-\mid\!H\!\mid}$, de sorte que pour tout $i\in\lbrace1,\ldots,\mid\!T\!\mid\!-\!\mid\!H\!\mid\rbrace$, $T[V(H')\cup\lbrace x_{1},\ldots,x_{i} \rbrace]$ est indécomposable et non critique.
\end{lemme}
\begin{proof}[\textbf{Preuve}]On se fixe un tournoi indécomposable $H$ à au moins $5$ sommets. Il suffit de montrer le résultat du lemme pour tout tournoi indécomposable et non critique $T$ à au moins $\mid\! H\!\mid +1$ abritant $H$. On procède par récurrence sur $\mid\!T\!\mid$. Le résultat est trivial lorsque $\mid\!T\!\mid=\mid\!H\!\mid+1$. Soit alors, un tournoi indécomposable et non critique $T$  abritant $H$ et tel que $\mid\!T\!\mid\geqslant\mid\!H\!\mid+2$. En appliquant plusieurs fois le théorème \ref{Gaku +1} à partir de $H$, on obtient une suite $K_{0}\simeq H,K_{1},\ldots,K_{\mid\!T\!\mid-\mid\!H\!\mid}=T$ de tournois indécomposables tels que pour tout $i\in\lbrace 1,\ldots,\mid\!T\!\mid\!-\!\mid\!H\!\mid\rbrace$, $K_{i}$ abrite $K_{i-1}$ et $\mid\!K_{i}\!\mid=\mid\!K_{i-1}\!\mid+1$. Le tournoi $K_{\mid\!T\!\mid-\mid\!H\!\mid-1}$ est ainsi indécomposable, non critique et abrite $H$. En appliquant l'hypothèse de récurrence au tournoi $T'=K_{\mid\!T\!\mid-\mid\!H\!\mid-1}$, il existe un sous-tournoi $H'$ de $T'$ (et donc de $T$) tel que $H'\simeq H$ et les sommets de $V(T')\setminus V(H')$ peuvent être ordonnés $x_{1},x_{2},\ldots,x_{\mid\!T\!\mid-\mid\!H\!\mid-1}$ de sorte que pour tout $i\in\lbrace1,\ldots,\mid\!T\!\mid - \mid\!H\!\mid\!-1\rbrace$, $T'[V(H')\cup\lbrace x_{1},\ldots,x_{i} \rbrace]$ est indécomposable et non critique. Comme pour tout $i\in\lbrace1,\ldots,\mid\!T\!\mid\!-\!\mid\!H\!\mid\!-1\rbrace$, $T'[V(H')\cup\lbrace x_{1},\ldots,x_{i} \rbrace]=T[V(H')\cup\lbrace x_{1},\ldots,x_{i} \rbrace]$, alors il suffit de prendre la suite $x_{1},x_{2},\ldots,x_{\mid\!T\!\mid-\mid\!H\!\mid-1},x_{\mid\!T\!\mid-\mid\!H\!\mid}$, où $x_{\mid\!T\!\mid-\mid\!H\!\mid}$ est l'élément de $V(T)\setminus V(T')$.
\end{proof}
Comme tout tournoi indécomposable à au moins $5$ sommets abrite un tournoi indécomposable à $5$ sommets, alors d'après le lemme \ref{1.}, pour tout tournoi $T$ indécomposable et non critique à au moins $5$ sommets, on a $ f(T)\leqslant 5$. Le corollaire suivant montre que la correspondance $T\longmapsto f(T)$ de la classe des tournois indécomposables et non critiques  (munie du préordre de l'abritement) est décroissante. 
\begin{corollaire}\label{f décroissante}
\'Etant donnés deux tournois $T$ et $T'$ indécomposables et non critiques à au moins $5$ sommets, si $T'$ s'abrite dans $T$, alors $f(T)\leqslant f(T')$.  
\end{corollaire}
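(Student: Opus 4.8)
Le plan est de déduire la décroissance d'une version \og d'un pas \fg{}, le long de la chaîne de sous-tournois fournie par le lemme~\ref{1.}. Si $\mid\!T\!\mid=\mid\!T'\!\mid$, alors $T'$ s'abritant dans $T$ entraîne $T\simeq T'$, et comme la forte criticité est un invariant d'isomorphisme, on a $f(T)=f(T')$. Je suppose donc $\mid\!T'\!\mid<\mid\!T\!\mid$, ce qui force $\mid\!T\!\mid\geqslant 6$. Puisque $T'$ s'abrite dans $T$, il existe $W\subseteq V(T)$ avec $T[W]\simeq T'$ et $5\leqslant\mid\!W\!\mid<\mid\!T\!\mid$. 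J'applique alors le lemme~\ref{1.} à $H=T[W]$ pour obtenir un sous-tournoi $H'\simeq T'$ et une chaîne $K_{0}=H',K_{1},\ldots,K_{m}=T$, où $m=\mid\!T\!\mid-\mid\!T'\!\mid$, de tournois indécomposables et non critiques telle que $K_{i-1}=K_{i}-x_{i}$ pour tout $i\in\lbrace 1,\ldots,m\rbrace$.

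Le point central est l'inclusion $\mathscr{C}(K_{i})\subseteq\mathscr{C}(K_{i-1})$ pour chaque $i$, qui repose sur deux observations. Premièrement, la forte criticité \emph{descend} aux sous-tournois indécomposables: si $S'$ est un sous-tournoi indécomposable à au moins $5$ sommets d'un tournoi indécomposable $S$ et si $x\in V(S')$ est fortement critique dans $S$, alors $x$ est fortement critique dans $S'$. En effet, toute partie $Y\subseteq V(S')$ telle que $x\in Y$, $\mid\!Y\!\mid\geqslant 5$ et $S'[Y]$ indécomposable vérifie $S'[Y]=S[Y]$, de sorte que $x$ est critique dans $S'[Y]$ par forte criticité de $x$ dans $S$. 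Deuxièmement, le sommet retiré $x_{i}$ ne peut pas être fortement critique dans $K_{i}$: en prenant $X=V(K_{i})$ dans la définition, la forte criticité de $x_{i}$ imposerait que $K_{i}-x_{i}$ soit décomposable, contredisant l'indécomposabilité de $K_{i-1}=K_{i}-x_{i}$.

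En combinant ces deux points, la seconde observation donne $\mathscr{C}(K_{i})\subseteq V(K_{i})\setminus\lbrace x_{i}\rbrace=V(K_{i-1})$, puis la première (appliquée à $S=K_{i}$ et $S'=K_{i-1}$) montre que chaque élément de $\mathscr{C}(K_{i})$ appartient à $\mathscr{C}(K_{i-1})$, d'où l'inclusion annoncée. En enchaînant le long de la chaîne, $\mathscr{C}(T)=\mathscr{C}(K_{m})\subseteq\cdots\subseteq\mathscr{C}(K_{0})=\mathscr{C}(H')$, et donc $f(T)\leqslant f(H')=f(T')$. L'obstacle principal n'est pas calculatoire: il réside dans l'identification de la bonne réduction \og d'un pas \fg{} et dans la vérification que la chaîne du lemme~\ref{1.} procède bien par retraits d'un seul sommet préservant l'indécomposabilité, ce qui légitime simultanément l'argument de descente et l'exclusion du sommet retiré.
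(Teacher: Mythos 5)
Votre démonstration est correcte et suit essentiellement la même démarche que celle de l'article : application du lemme~\ref{1.} pour obtenir $H'\simeq T'$, constat que les sommets retirés ne sont pas fortement critiques et que la forte criticité descend aux sous-tournois indécomposables à au moins $5$ sommets, d'où $\mathscr{C}(T)\subseteq\mathscr{C}(H')$ et $f(T)\leqslant f(H')=f(T')$. Vous ne faites qu'expliciter, pas à pas le long de la chaîne $K_{0},\ldots,K_{m}$, les deux observations que la preuve de l'article utilise de façon plus condensée.
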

\begin{proof}[\textbf{Preuve}]   D'après le lemme \ref{1.}, il existe un sous-tournoi $H'$ de $T$ tel que $H'\simeq T'$ et les sommets de $V(T)\setminus V(H')$ sont tous non fortement critiques. Ainsi, $\mathscr{C}(T)\subseteq V(H')$,  et donc tout sommet fortement critique de $T$ est aussi un sommet fortement critique de $H'$, c'est-à-dire $\mathscr{C}(T)\subseteq \mathscr{C}(H')$. Ainsi, $f(T)\leqslant f(H')=f(T')$.  
\end{proof}
Afin d'améliorer la borne $5$ de $f(T)$, notons les faits suivants.
\begin{Fait}\label{2.}
Tout tournoi indécomposable et non critique à au moins $6$ sommets, abrite un tournoi indécomposable à $6$ sommets.
\end{Fait}
\begin{proof}[\textbf{Preuve}]   Soit $T$ un tournoi indécomposable et non critique à au moins $6$ sommets. Si $\mid\!T\!\mid=6$, le résultat est évident et si $\mid\!T\!\mid\geqslant 7$, il suffit d'appliquer le corollaire \ref{cor-1,2sommets} plusieurs fois pour obtenir un sous-tournoi indécomposable à $6$ sommets  de $T$. 
\end{proof}
\begin{Fait}\label{3.}
Tout tournoi indécomposable $T$ à au moins $5$ sommets et admettant au plus un sommet non critique, est d'ordre impair.
\end{Fait}
\begin{proof}[\textbf{Preuve}]  
comme les tournois critiques sont d'ordres impairs, on peut supposer que $T$ admet un unique sommet non critique $x$. Le tournoi $T$ étant indécomposable à au moins $5$ sommets, alors il existe $y\neq z \in V(T)$ tels que $T[\lbrace x,y,z\rbrace]\simeq C_{3}$. Ainsi $\vert T\vert -3 $ est nécessairement pair (et donc $\vert T\vert$ est impair), car sinon, en appliquant plusieurs fois le théorème~\ref{ER +2, pour tournoi} à partir de $T[\lbrace x,y,z\rbrace]$, il existe un sommet $a\notin\{x,y,z\}$ tel que $T-a$ est indécomposable. Contradiction, puisque $x$ est l'unique sommet non critique.
\end{proof}
\begin{proposition}\label{ne dépasse pas 4}
Pour tout tournoi indécomposable et non critique $T$ à au moins $5$ sommets, on a $f(T)\leqslant 4$.
\end{proposition}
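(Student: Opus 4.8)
Le plan est de ramener le problème aux tournois indécomposables à $6$ sommets, puis d'exploiter un argument de parité fourni par le fait~\ref{3.}. D'abord, comme $T_5$, $U_5$ et $W_5$ sont les seuls tournois indécomposables à $5$ sommets et qu'ils sont tous critiques, un tournoi indécomposable et non critique vérifie nécessairement $\mid\!T\!\mid\geqslant 6$. D'après le fait~\ref{2.}, $T$ abrite un tournoi indécomposable $H$ à $6$ sommets; et comme les tournois critiques sont d'ordres impairs, $H$ est automatiquement non critique. Le corollaire~\ref{f décroissante} donne alors $f(T)\leqslant f(H)$. Il suffit donc d'établir que $f(H)\leqslant 4$ pour tout tournoi indécomposable $H$ à $6$ sommets.

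Ensuite, je montrerais que, pour un tel $H$, un sommet est fortement critique si et seulement s'il est critique. En effet, soit $x\in V(H)$ et soit $X\subseteq V(H)$ avec $x\in X$, $\mid\!X\!\mid\geqslant 5$ et $H[X]$ indécomposable. Puisque $\mid\!H\!\mid=6$, on a $\mid\!X\!\mid\in\lbrace 5,6\rbrace$. Si $\mid\!X\!\mid=5$, alors $H[X]\in\lbrace T_5,U_5,W_5\rbrace$ est critique, donc $x$ y est automatiquement un sommet critique; la seule condition non triviale de la définition est alors celle obtenue pour $X=V(H)$, à savoir que $x$ est critique dans $H$. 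Ainsi l'ensemble $\mathscr{C}(H)$ coïncide exactement avec l'ensemble des sommets critiques de $H$, et $f(H)$ est le nombre de sommets critiques de $H$.

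Enfin, je compterais les sommets critiques de $H$. Le tournoi $H$ étant non critique, il admet au moins un sommet non critique. Si $H$ n'admettait qu'un seul sommet non critique, le fait~\ref{3.} forcerait $\mid\!H\!\mid$ à être impair, ce qui contredirait $\mid\!H\!\mid=6$. Donc $H$ possède au moins deux sommets non critiques, d'où au plus $4$ sommets critiques, c'est-à-dire $f(H)\leqslant 4$. On conclut par $f(T)\leqslant f(H)\leqslant 4$.

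La principale difficulté — en réalité le point clé qui fait passer la borne de $5$ (déjà obtenue via le lemme~\ref{1.}) à $4$ — est la double observation suivante: d'une part, la condition définissant la forte criticité est automatiquement satisfaite sur les parties à $5$ sommets, parce que tout tournoi indécomposable à $5$ sommets est critique, ce qui réduit la forte criticité à la simple criticité dès qu'on travaille sur un tournoi à $6$ sommets; d'autre part, l'argument de parité du fait~\ref{3.} interdit qu'un tournoi indécomposable d'ordre pair n'ait qu'un seul sommet non critique. La combinaison de ces deux points, jointe à la décroissance de $f$ (corollaire~\ref{f décroissante}) et au fait~\ref{2.}, suffit à conclure sans aucune analyse de cas sur la structure fine de $T$.
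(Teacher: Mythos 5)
Votre preuve est correcte et suit essentiellement la même démarche que celle de l'article : réduction à un sous-tournoi indécomposable à $6$ sommets via le fait~\ref{2.}, décroissance de $f$ via le corollaire~\ref{f décroissante}, puis argument de parité via le fait~\ref{3.} pour obtenir au moins deux sommets non critiques. Votre observation supplémentaire (l'équivalence entre criticité et forte criticité à l'ordre $6$) est juste mais superflue, l'inclusion $\mathscr{C}(H)\subseteq\lbrace\text{sommets critiques de }H\rbrace$ suffisant à conclure.
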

\begin{proof}[\textbf{Preuve}] 
Soit $T$ un tournoi indécomposable et non critique à au moins $5$ sommets. Les tournois indécomposables à $5$ sommets étant critiques, on a alors $\mid\!T\!\mid\geqslant 6$. D'après le fait~\ref{2.}, il existe $X\subseteq V(T)$ tel que $\mid\!X\!\mid =6$ et $T[X]$ est indécomposable. D'après le lemme~\ref{1.}, il existe $Y\subseteq V(T)$ tel que $T[Y]\simeq T[X]$ et $ \mathscr{C}(T)\subseteq Y$. Comme $T[Y]$ est indécomposable et non critique  (d'ordre pair), alors d'après le corollaire~\ref{f décroissante}, on a $f(T)\leqslant f(T[Y])$. Il suffit alors de remarquer que $f(T[Y])\leqslant 4$ d'après le fait~\ref{3.}. 
\end{proof}
Afin de prouver que la borne $4$ est optimale, nous introduisons pour tout $n\geqslant 2$, le tournoi $W_{2n+2}$ défini sur $ \lbrace 0,\ldots,2n+1\rbrace$  comme suit: $W_{2n+2}-(2n+1)=W_{2n+1}$ et $N^{+}_{W_{2n+2}}(2n+1)=\lbrace 2n-2,2n \rbrace$ (voir Figure~\ref{W_{2n+2}}). Nous vérifions que le tournoi $ W_{2n+2}$ est indécomposable et non critique. De plus, on montre que pour tout $n\geqslant 2$, on a $f(W_{2n+2})=4 $ et plus précisément,  $\mathscr{C}(W_{2n+2})=\lbrace 0,2n-2,2n-1,2n\rbrace$. Nous montrons aussi que, pour tout $n\geqslant 3$, le tournoi $W_{2n+2}-(2n-3)$ est indécomposable et non critique. Donc, d'après le corollaire~\ref{f décroissante} et la proposition~\ref{ne dépasse pas 4}, $f(W_{2n+2}-(2n-3)) =4$. Ainsi, On vient de donner deux exemples de tournois indécomposables et non critiques d'ordres arbitrairement grands (pairs et impairs) pour lesquels la borne $4$ est atteinte.  
\begin{figure}[h]
\begin{center}
\begin{tikzpicture}[line cap=round,line join=round,>=triangle 45,x=1.25cm,y=1cm]
\draw [line width=0.4pt] (10.,0.)-- (11.,0.);
\draw [->,>=latex] (10,0)-- (10.5,0);
\draw [line width=0.4pt] (11.,0.)-- (6.5,2.5);
\draw [->,>=latex] (11,0)-- (8.75,1.25);
\draw [line width=0.4pt] (9.,0.)-- (10.,0.);
\draw [->,>=latex] (9,0)-- (9.5,0);
\draw [line width=0.4pt](6.,0.)-- (7.,0.);
\draw [->,>=latex] (6,0)-- (6.5,0);
\draw [line width=0.4pt]  (3.,0.)-- (4.,0.);
\draw [->,>=latex] (3.0,0)-- (3.5,0);
\draw [line width=0.4pt] (6.5,2.5)-- (3.,0.);
\draw [->,>=latex] (6.5,2.5)-- (4.75,1.25);
\draw [line width=0.4pt] (4.,0.)-- (6.5,2.5);
\draw [->,>=latex] (4,0)-- (5.25,1.25);
\draw [line width=0.4pt] (6.5,2.5)-- (6.,0.);
\draw [->,>=latex] (6.5,2.5)-- (6.25,1.25);
\draw [line width=0.4pt] (7.,0.)-- (6.5,2.5);
\draw [->,>=latex] (7.,0)-- (6.75,1.25);\draw [line width=0.4pt] (10.,0.)-- (6.5,2.5);
\draw [->,>=latex] (6.5,2.5)-- (8.25,1.25);
\draw [line width=0.4pt] (6.5,2.5)-- (9.,0.);
\draw [->,>=latex] (9,0)-- (7.75,1.25);
\draw [line width=0.4pt] (4.,0.)-- (5.,0.);
\draw [->,>=latex] (4,0)-- (4.5,0);
\draw [line width=0.4pt] (7.,0.)-- (8.,0.);
\draw [->,>=latex] (7,0)-- (7.5,0);
\draw [line width=0.4pt] (3.,-0.6)-- (3.74,-0.6);
\draw [line width=0.4pt] (3.,-0.6)-- (3.,-0.3);
\draw [->,>=latex] (3.,-0.6)-- (3.5,-0.6);
\begin{scriptsize}
\draw [fill=black] (3.,0.) circle (3.5pt);
\draw[color=black] (2.9,-0.28) node {$0$};
\draw [fill=black] (4.,0.) circle (2pt);
\draw[color=black] (4.,-0.28) node {$1$};
\draw [fill=black] (6.,0.) circle (2pt);
\draw[color=black] (6.,-0.28) node {$2i$};
\draw [fill=black] (7.,0.) circle (2pt);
\draw[color=black] (7.2,-0.28) node {$2i+1$};
\draw [fill=black] (9.,0.) circle (2pt);
\draw[color=black] (9.0,-0.28) node {$2n-3$};
\draw [fill=black] (10.,0.) circle (3.5pt);
\draw[color=black] (10.05,-0.28) node {$2n-2$};
\draw [fill=black] (11.,0.) circle (3.5pt);
\draw[color=black] (11.22,-0.28) node {$2n-1$};
\draw [fill=black] (6.5,2.5) circle (3.5pt);
\draw[color=black] (6.5,2.78) node {$2n$};
\draw [fill=black] (5.3,0.) circle (0.5pt);
\draw [fill=black] (5.64,0.) circle (0.5pt);
\draw [fill=black] (8.24,0.) circle (0.5pt);
\draw [fill=black] (8.58,0.) circle (0.5pt);
\draw [fill=black] (10.,-1.7) circle (2pt);
\draw[color=black] (10.,-1.98) node {$2n+1$};
\draw [line width=0.912pt] (10,-1.7)-- (10,0.);
\draw [->,] (10,-1.7)-- (10,-0.7);
\draw[line width=0.8pt] (11.2,1.5) to[bend right=50] (6.5,2.5);
\draw[->,line width=0.8pt] (10,-1.7) to[bend right=45] (11.2,1.5);
\end{scriptsize}
\end{tikzpicture}  
\vspace{0.2cm} \caption{\textbf{Le Tournoi $W_{2n+2}$}}
\label{W_{2n+2}}
\end{center}
\end{figure}
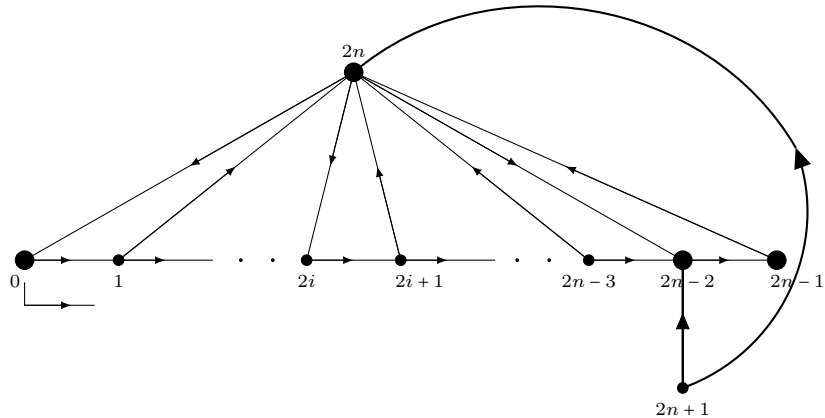
\vspace{2cm}

\end{document}